\hoffset=-15pt \voffset=-40pt \topmargin=17pt \textwidth=17cm
\textheight=30cm \evensidemargin=0cm \oddsidemargin=0cm

\documentclass[11pt,a4paper]{article}
\usepackage{fancyhdr}
\usepackage{amsmath}
\usepackage{amsthm}
\usepackage{amssymb}
\usepackage{wasysym}
\usepackage{paralist}
\usepackage{makeidx}
\usepackage[all]{xy}
\usepackage{mathdots}
\usepackage{yhmath}

\usepackage{young}
\usepackage[vcentermath]{youngtab}

\usepackage{graphicx}
\usepackage{epstopdf}

\DeclareGraphicsRule{.tif}{png}{.png}{`convert #1 `basename #1 .tif`.png}
\input xy

\parindent=0pt

\newcommand{\nc}{\newcommand}
 
 \nc{\cl}{\centerline}
 
 \nc{\SL}{{\rm SL}}
 \nc{\hatQ}{{\hat Q}}
 \nc{\sgn}{{\rm sgn}}

\nc\diag{{\rm diag}}
\renewcommand{\vert}{{\,|\,}}
\nc{\hatL}{{\hat L}}

\nc{\barE}{{\bar   E}}
\nc{\D}{{\mathcal D}}
\nc{\E}{{\mathcal E}}
\nc{\F}{{\mathcal F}}
\nc{\even}{{\rm e}}
\nc{\ep}{\epsilon}
\nc{\odd}{{\rm o}}
\nc{\Coker}{{\rm Coker}}
\nc{\olE}{{\overline E}}
\nc{\indBG}{{\rm ind}_B^G\,}
\nc{\indHG}{{\rm ind}_H^G\,}

\nc{\que}{{\mathbb Q}}
\nc{\barlambda}{{\bar\lambda}}
\nc{\barmu}{{\bar\mu}}
\nc{\barm}{{\bar m}}
\nc{\divind}{{\rm div.ind}}

\nc{\Sym}{{\rm Sym}}
\renewcommand{\dim}{{\rm dim\,}}
\newcommand{\q}{\quad}
\newcommand{\de}{\delta}

\newcommand{\bs}{\bigskip}

\renewcommand{\vert}{\,|\,}
 
\newcommand{\zed}{{\mathbb Z}}
\newcommand{\Ext}{{\rm Ext}}
\nc{\geom}{{\rm geom}}
\nc{\rep}{{\rm rep}}
\renewcommand{\inf}{{\rm inf}}
\newcommand{\Gqn}{{G_q(n)}}
\newcommand{\Bqn}{{B_q(n)}}

\newcommand{\nablaq}{{\nabla_q}}

\newcommand{\Deltaq}{{\Delta_q}}
\newcommand{\cf}{{\rm cf}}
\newcommand{\barG}{{\bar G}}
\newcommand{\barB}{{\bar B}}
\newcommand{\barT}{{\bar T}}
\newcommand{\barI}{{\bar I}}
\newcommand{\barL}{{\bar L}}
\newcommand{\barD}{{\bar D}}
\newcommand{\barr}{{\bar r}}
\newcommand{\barnabla}{{\overline  \nabla}}
\newcommand{\id}{{\rm id}}
\newcommand{\rmif}{{\rm if\q }}
\newcommand{\rmotherwise}{{\rm otherwise}}
\newcommand{\rmand}{{\rm \q and\q }}

\newcommand{\ch}{{\rm ch\,}}
\newcommand{\ind}{{\rm ind}}

\newcommand{\Hom}{{\rm Hom}}
\newcommand{\soc}{{\rm soc}}
\newcommand{\GL}{{\rm GL}}

\newtheorem{definition}{Definition}[section]
\newtheorem{proposition}[definition]{Proposition}
\newtheorem{theorem}[definition]{Theorem}
\newtheorem{lemma}[definition]{Lemma}

\newtheorem{remark}[definition]{Remark}

\begin{document}

\centerline{\bf Polynomially and Infinitesimally Injective Modules}

\bigskip

\centerline{Stephen Donkin and Haralampos Geranios}

\bigskip

{\it Department of Mathematics, University of York, York YO10 5DD}

{\tt stephen.donkin@york.ac.uk and }\\
{\tt   haralampos.geranios@epfl.ch}

\bs 

\centerline{11  May   2013}
\bs\bs\bs

\section*{Abstract}

The injective polynomial modules for a general linear group $G$ of degree $n$  are labelled by the  partitions with at most $n$ parts.  Working over an algebraically closed  field of characteristic $p$,  we consider the question of  which partitions correspond to polynomially injective modules that are also injective as modules for the restricted enveloping algebra of the  Lie algebra of $G$.  The question is  related to the  \lq\lq index of divisibility" of a polynomial module in general,  and an explicit answer is given for $n=2$.

\section{Introduction}

\bs

\q  Let $k$ be an algebraically closed field of positive characteristic.  Let $G$ be a reductive algebraic group over $k$. The indecomposable tilting modules are labelled by highest weight. It is rather easy to describe the tilting modules which are injective on restriction to the first infinitesimal subgroup $G_\inf$ in terms of highest weight, see \cite{D1}, (2.1) Proposition.  This applies  in particular if we take $G$ to be  $\GL_n(k)$, the group of invertible $n\times n$-matrices over $k$.  The injective polynomial modules correspond to the  tilting modules via Ringel duality, see \cite{R}, \cite{D1}, Section 3, so it is perhaps rather surprising that it not at all clear at the outset which polynomial injective modules are injective on restriction to $G_\inf$.    This is the problem that we study in this  paper. We shall call a rational $\GL_n(k)$-module polynomially injective if it is polynomial and injective in the category of polynomial modules. We shall call a  rational $\GL_n(k)$-module infinitesimally injective if its restriction to $G_\inf$ is injective. Thus our problem is to determine which polynomially injective modules are infinitesimally injective.

\q We believe this to be  worthy of study not only by comparison with the situation for tilting modules but because the injective polynomial modules are of interest in their own right. 
The rational cohomology of a polynomial $\GL_n(k)$-module may be calculated within the polynomial  category (i.e., using a resolution by injective modules in the  polynomial category) by a result of the first author, see e.g., \cite{SD20}, Theorem 9. This result finds an application in the work of Friedlander and Suslin, \cite{FS}, proving the finite generation of the cohomology of a finite group scheme.   The category of modules for $\GL_n(k)$ that are polynomial of degree $r$ is equivalent to the category of modules for a finite dimensional algebra, namely the Schur algebra $S(n,r)$, as in \cite{Green}.  So our problem is equivalent  to the study of the finite dimensional injective  left $S(n,r)$-modules  
or (by duality) the  finite dimensional projective right $S(n,r)$-modules that are injective on restriction to $G_\inf$.  However, we prefer to work with injective  modules since  these are closely related to the injective modules in the rational category (and in general non-zero projective modules do not exist in the rational category, \cite{SD38}).

 \q We give a solution to our problem  in terms of the \lq\lq index of divisibility" of a polynomially injective module. This leads us naturally to an investigation of the set of composition factors of the tensor product of several copies of the symmetric algebra $S(E)$ of the naturally $G$-module $E$, see Section 3.  This investigation will be taken further in a subsequent  paper, \cite{DG2}. 

\q Though our main interest is in the classical representation theory of general linear groups, the quantum case is not significantly more difficult for the problems considered here and our treatment will cover, for the most part, both cases simultaneously. The terminology of polynomially injective and infinitesimally injective modules will also be used when working with a quantised general linear group  at a root of unity.

\q The layout of the paper is as follows.  In the   preliminary Section  2, we establish the notation that we shall use in the sequel. This is rather lengthy since we need the substantial combinatorial background  and the representation theory and homological theory  associated with general linear groups 
 and to explain how this applies in the general quantised set-up. The quantisation of the general linear group that we use is that introduced by Richard Dipper and the first author in \cite{DiDo}.  In Section 3 we make a comparison between the polynomial injective modules and infinitesimal modules. We show that   injectivity of a module on restriction to the infinitesimal subgroups implies a factorisation of its character, Lemmas 3.1 and Lemma 3.2.    This leads, in the context of our problem, to the notion of the \lq\lq index of divisibility" of a polynomial module (the number of times the module is divisible by the determinant module in the polynomial category) and we make a detailed study of this notion, Proposition 3.5.  A description of the divisibility index of an injective module, in terms of the induced modules occurring in a filtration, is given in Lemma 3.7,  and  this is elaborated in terms of composition factors of symmetric powers in Lemma 3.9.  In Section 4 a precise criterion for injectivity on restriction to the infinitesimal subgroup is given in terms of the index of divisibility, Theorem 4.1. This may be seen as our main result.  We point out that in good circumstances injectivity of a polynomial module on restriction to the infinitesimal subgroup leads to a tensor product factorisation, Lemma 4.2. 

\q In the final Section 5 we use the theory established so far to give a complete analysis in the case $n=2$.  In particular we show that injectivity on restriction to the infinitesimal subgroup always gives rise to a tensor product factorisation in this case,  Proposition 5.6.

\bs\bs

\section{Preliminaries}

\subsection{Combinatorics}

\q We first set up the background notation for the representation theory of the quantum general linear groups, starting with the basic combinatorics. Let $n$ be a positive integer. We set $X(n)=\zed^n$.     We write $\Lambda(n)$ for the set of $n$-tuples of nonnegative integers. 
We write $X^+(n)$ for the set of dominant $n$-tuples of integers, i.e., the set of elements $\lambda=(\lambda_1,\ldots,\lambda_n)\in X(n)$ such that $\lambda_1\geq \cdots\geq  \lambda_n$, and write  $\Lambda^+(n)$ for $X^+(n)\bigcap \Lambda(n)$.
  We shall sometimes refer to elements of $\Lambda(n)$ as polynomial weights and elements of $\Lambda^+(n)$ as polynomial dominant weights. We define the degree $\deg(\lambda)$ of $\lambda=(\lambda_1,\ldots,\lambda_n)\in X(n)$ by $\deg(\lambda)=\lambda_1+\cdots+\lambda_n$.  For a nonnegative integer $r$ we write $\Lambda^+(n,r)$ for the set of partitions of $r$ into at most $n$ parts, i.e., the set of elements of $\Lambda^+(n)$ of degree $r$.

\q Let $e$ be a non-negative integer. We say that an element \\
 $\lambda=(\lambda_1,\ldots,\lambda_n)\in X(n)$ is column $e$-regular if $0\leq \lambda_1-\lambda_2,\ldots,\lambda_{n-1}-\lambda_n,\lambda_n<e$.  We  write $X_{\inf,e}(n)$ for the set of all column $e$-regular elements of $X(n)$. A  element $\lambda\in X(n)$ has a unique expression $\lambda=\lambda^0+e\barlambda$ with $\lambda^0\in X_{\inf,e}(n)$, $\barlambda\in X(n)$, moreover if $\lambda\in X^+(n)$ then $\barlambda\in X^+(n)$ and if  $\lambda\in\Lambda^+(n)$ then $\barlambda\in \Lambda^+(n)$.

\q For $1\leq i\leq n$ we write $\ep_i$ for the element of $X(n)$ with $i$th entry $1$ and all other entries $0$.   We write $\Phi$ for the set of all $\ep_i-\ep_j$, with $1\leq i,j\leq n$, $i\neq j$. We write $\Phi^+$ for the set of all $\ep_i-\ep_j$ with $1\leq i<j \leq n$.  We identify $X(n)$ with a subgroup of $\que^n$. We write $\rho$ for $\frac{1}{2}\sum_{1\leq i<j\leq n}(\ep_i-\ep_j)$, so that $2\rho=(n-1,n-3,\ldots,-(n-1))$.

\q  We write $\Sym(n)$ for the symmetric group on $\{1,2,\ldots,n\}$.  We write $w_0$ for the longest element of $\Sym(n)$. In the Lie theoretic context $\Phi^+$ has an interpretation as a set of positive roots so that $\rho$ is the half sum of positive roots.
However, we can usually work instead with the element  $\delta=(n-1,n-2,\ldots,1,0)\in X(n)$. The relationship between these elements is $2\rho=2\delta-(n-1)\omega$, where $\omega=(1,1,\ldots,1)$.
 We form the integral group ring $\zed X(n)$. This has $\zed$-basis of formal exponentials $\exp(\lambda)$ which multiply according to the rule $\exp(\lambda) \exp(\mu)=\exp(\lambda+\mu)$, $\lambda,\mu\in X(n)$.

 \subsection{Quantised general linear groups}

 \q Now  let $k$ a field. We use the  expression \lq\lq $G$ is a quantum group over $k$"  to indicated that we have in mind a Hopf algebra over $k$ denoted $k[G]$ and called the coordinate algebra of $G$. Suppose that $G$ and $H$ are quantum groups over $k$.  We use the expression   \lq\lq $\phi:G\to H$ is a morphism of quantum groups" to indicate that we have in mind a morphism of Hopf algebras from $k[H]$ to $k[G]$ denoted $\phi^\sharp$ and called the comorphism of $\phi$.  In this way we regard the category of quantum groups over $k$ as the opposite of the category of Hopf algebras over $k$. By the expression \lq\lq $J$ is a quantum subgroup of $G$"  we mean that $k[J]=k[G]/I$, for some Hopf ideal $I$, which we call the defining ideal of $J$.

\q  By a left (resp. right) $G$-module we mean a right (reps. left) $k[G]$-comodule.  Let $V$ be a finite dimensional left $G$-module with $k$-basis $v_i$, $i\in I$,  and structure map $\tau:V\to V\otimes k[G]$. We have the corresponding \lq\lq coefficient elements" $f_{ij}\in k[G]$ defined by the equations
 $$\tau(v_i)=\sum_{j\in I}^n  v_j\otimes f_{ji}$$
 for $i\in I$.  The $k$-span of the elements $f_{ij}$, $i,j\in I$, is called the coefficient space of $V$ and will be denote $\cf(V)$. (It is independent of the choice of basis.)  By a $G$-module we mean a left $G$-module. 
  If $\phi:G\to H$ is a morphism of quantum groups over $k$ and $V$ is an $H$-module with comodule structure map $\tau:V\to V\otimes k[H]$ we write $V^\phi$ for the space $V$ regarded as a $G$-module via the structure map $:(\id_V\otimes  \phi^\sharp)\circ \tau:V\to V\otimes k[G]$, where $\id_V$ is the identity map on $V$.

\q  Let $0\neq q\in k$. We consider the corresponding quantum general linear group $G_q(n)$, as in \cite{D2}.   Further details of the framework and proofs or precise references for the results described below may be found in \cite{D2} (see also \cite{DoStd} and \cite{Cox}).

 \q  We start with  the bialgebra $A_q(n)$. As a $k$-algebra this is defined by generators $c_{ij}$, $1\leq i,j \leq n$, subject to certain quadratic relations (see e.g., \cite{D2}, 0.22).  Comultiplication $\de:A_q(n)\to A_q(n)\otimes A_q(n)$ and the augmentation map $\ep:A_q(n)\to k$ are given by $\de(c_{ij})=\sum_{r=1}^n c_{ir}\otimes c_{rj}$ and $\ep(c_{ij})=\de_{ij}$, for $1\leq i,j\leq n$.  The algebra  $A_q(n)$ has a natural grading $A_q(n)=\bigoplus_{r=0}^\infty A_q(n,r)$ such that each  $c_{ij}$ has degree $1$. 
Each component $A_q(n,r)$ is a finite dimensional subcoalgebra  and the dual algebra is  the Schur algebra $S_q(n,r)$. 

\q The sign of a permutation $\pi\in \Sym(n)$ is denoted $\sgn(\pi)$. The quantum determinant $d_q=\sum_{\pi\in \Sym(n)} \sgn(\pi) c_{1,1\pi}\ldots c_{n,n\pi}$ is a group-like element and $A_q(n)$ has an Ore localisation $A_q(n)_{d_q}$.  The bialgebra structure of $A_q(n)$ extends uniquely to a bialgebra structure on the localisation and  this localised bialgebra is in fact a Hopf algebra. The quantum general linear group $G_q(n)$ is the quantum group whose coordinate algebra  $k[G_q(n)]=A_q(n)_{d_q}$.  This is the construction introduced in \cite{DiDo}. For the parallel development of the  case of the Manin quantisation, see \cite{PW}.

\subsection{Representation Theory}

\q We write $B_q(n)$ for the (quantum) subgroup of $G_q(n)$ whose defining ideal is generated by all $c_{ij}$ with $i<j$.   We write $T_q(n)$ for the subgroup of $G_q(n)$ whose defining ideal is generated by all $c_{ij}$ with $i\neq j$.  

\q A left $G$-module $V$ is said to be polynomial if $\cf(V)\leq A_q(n)$ and polynomial of degree $r$ if $\cf(V)\leq A_q(n,r)$.  A polynomial $\Gqn$-module $V$ has a unique module decomposition $V=\bigoplus_{r=0}^\infty V(r)$, where $V(r)$ is polynomial of degree $r$. Regarding a $\Gqn$-module polynomial of degree $r$ as an $A_q(n,r)$-comodule and hence as a module for the dual algebra $S_q(n,r)$ gives an  equivalence between the categories of  left $\Gqn$-modules polynomial of degree $r$ and left $S_q(n,r)$-modules.  A $\Gqn$-module $V$  is polynomial of degree $r$ if and only all composition factors of $V$ are polynomial of degree $r$ (see \cite{DoStd}, p263 for the quantum case and for example \cite{Jan}, II, A.3 Proposition for the classical case).  As in the classical case, we shall say that a $G$-module $V$ is polynomially injective it if it polynomial and injective in the category of polynomial $G$-modules.

\q For each $\lambda=(\lambda_1,\ldots,\lambda_n)\in X(n)$ there is a one dimensional $B_q(n)$-module $k_\lambda$ with structure map $\tau: k_\lambda\to k_\lambda\otimes k[B_q(n)]$ taking $v\in k_\lambda$ to $v\otimes (c_{11}^{\lambda_1}c_{22}^{\lambda_2}\ldots c_{nn}^{\lambda_n}+I)$, where $I$ is the defining ideal of $B_q(n)$. Moreover the modules $k_\lambda$, $\lambda\in X(n)$, form a complete set of pairwise non-isomorphic simple $B_q(n)$-modules  and the restrictions of these modules to $T_q(n)$ form a complete set of pairwise non-isomorphic simple $T_q(n)$-modules. All $T_q(n)$-modules are completely reducible. 
 For a $T_q(n)$-module $V$ and $\lambda\in X(n)$  the $\lambda$ weight space $V^\lambda$ is the sum of all submodules isomorphic to $k_\lambda$.
 We say that $\lambda\in X(n)$ is a weight of $V$ if $V^\lambda\neq 0$. 
The dimension of a finite dimensional vector space $V$ over $k$ will be denoted by $\dim V$. 
The character $\ch V$ of a finite dimensional  $T$-module $V$ is the element of $\zed X(n)$ defined by
$\ch V=\sum_{\lambda\in X(n)} \dim V^\lambda \exp(\lambda)$.

\q Recall the classification of irreducible $G_q(n)$-modules by highest weight. We have the dominance partial order on $X(n)$: for $\lambda,\mu\in X(n)$ we have $\lambda\leq \mu$ if $\mu-\lambda$ is a sum of elements of $\Phi^+$.  Then for each $\lambda\in X^+(n)$ there is an irreducible  module $L_q(\lambda)$ which has unique highest weight $\lambda$ and such that $\lambda$ occurs as a weight with multiplicity one. The modules $L_q(\lambda)$, $\lambda\in X^+(n)$, form a complete set of pairwise non-isomorphic irreducible   $\Gqn$-modules, see  Theorem 2.10(ii) and Lemma 3.2 of \cite{DoStd}.  It follows (by induction on highest weight)  that finite dimensional $\Gqn$-modules have the same composition factors (counting multiplicities) if and only if they have the same character : for the classical case, see \cite{Jan},II, 2.7 Remark.

\q The quantum determinant $d_q$ spans a one dimensional subcomodule of $k[\Gqn]$. We write $D_q$ for the $\Gqn$-module $kd_q$. Then $D_q=L_q(\omega)$.  The modules $L_q(\lambda)$, $\lambda\in \Lambda^+(n)$, form a complete set of pairwise non-isomorphic polynomial $G_q(n)$-modules, see e.g., \cite{DiDo}, 2.3.4 Theorem. For a finite dimensional  $\Gqn$-module $V$ and $\lambda\in X^+(n)$ we write $[V:L_q(\lambda)]$ for the multiplicity of $L_q(\lambda)$ as a composition factor of $V$.

\q We shall also need modules induced from $B_q(n)$ to $G_q(n)$. For $\lambda\in X(n)$ the  induced module $\ind_{B_q(n)}^{G_q(n)} k_\lambda$ is non-zero if and only if $\lambda\in X^+(n)$, see \cite{DoStd}, Lemma 3.2.  For $\lambda\in X^+(n)$ we set $\nabla_q(\lambda)=\ind_\Bqn^\Gqn k_\lambda$. Then $\nablaq(\lambda)$ is finite dimensional  (its character is the Schur symmetric function corresponding to $\lambda$) and the $\Gqn$-module socle of $\nablaq(\lambda)$ is $L_q(\lambda)$. The module $\nablaq(\lambda)$ has unique highest weight $\lambda$ and this weight occurs with multiplicity one.

\q A filtration $0=V_0\leq V_0\leq \cdots\leq V_r=V$ for a finite dimensional  $G$-module $V$ is said to be good if for each $1\leq i\leq r$ the quotient $V_i/V_{i-1}$ is either zero or isomorphic to $\nablaq(\lambda^i)$ for some $\lambda^i\in X^+(n)$.  For a finite dimensional   $\Gqn$-module $V$ admitting a good filtration,  for each $\lambda\in X^+(n)$  the multiplicity $|\{1\leq i\leq r\vert V_i/V_{i-1}\cong \nablaq(\lambda)\}|$ is independent of the choice of the good filtration (since it is determined by the character of $V$)  and will be denoted $(V:\nablaq(\lambda))$. 

\q For a finite dimensional  module $V$ for a quantum group  $G$ over $k$ we write $V^*$ for the dual space $\Hom_k(V,k)$ regarded as a $G$-module in the usual way and for  a non-negative integer $r$,  we write $V^{\otimes r}$ for the $r$-fold tensor product $V\otimes\cdots\otimes V$.  We write $D_q^{\otimes -r}$ for the dual of $D_q^{\otimes r}$. For $\lambda\in X^+(n)$ we take as a definition of the Weyl module $\Deltaq(\lambda)$ the dual module $\nablaq(-w_0\lambda)^*$. Thus $\nablaq(\lambda)$ and $\Deltaq(\lambda)$ have the same character. 

\q  For $\lambda\in \Lambda^+(n)$ we write $I_q(\lambda)$ for the injective envelope of $L_q(\lambda)$ in the category of polynomial $\Gqn$-modules.  If $\lambda$ has degree $r$ then $I_q(\lambda)$ is polynomial of degree $r$. As an $S_q(n,r)$-module $I_q(\lambda)$ is the injective envelope of $L_q(\lambda)$, in particular it is finite dimensional. 
Moreover, for $\lambda\in  \Lambda^+(n,r)$, the module $I_q(\lambda)$ has a good filtration and we have the reciprocity formula
$(I_q(\lambda):\nablaq(\mu))=[\nablaq(\mu):L_q(\lambda)]$,  \cite{DoStd}, Section 4, (6).

\q If $k$ has characteristic $0$ and $q=1$  or if  $q$ is not a root of unity then then all $\Gqn$-modules are completely reducible, \cite{DoStd}, Section 4, (8),  and so all polynomial modules are polynomially injective. Moreover,  for example by  the main result of \cite{DoRes}, if $H$ is any finite quantum subgroup of $\Gqn$ then the restriction to $H$ of an injective $\Gqn$-module is injective. In particular the trivial module is injective and hence all $H$-modules are injective. 
So the problem addressed in this paper is trivial in this case and we shall assume from now on that either $q$ is a primitive $l$th root of unity, with $l\geq 2$, or that $q=1$ and  $k$ has characteristic $p>0$. 

\q To simplify notation, we shall write  $G$ for  $\Gqn$, write $B$ for $B_q(n)$ and write $T$ for $T_q(n)$. We shall write $\nabla(\lambda)$ for $\nabla_q(\lambda)$, write $L(\lambda)$ for $L_q(\lambda)$, for $\lambda\in X(n)$, and write $D$ for $D_q$. 

 \q All this applies also when  $q=1$. In that case the coordinate functions will be denoted $x_{ij}$ and we write $\barG$ for $G_q(n)$, write $\barB$ for $B_q(n)$ and write $\barT$ for $T_q(n)$. 
 We have the Frobenius morphism $F:G\to \barG$ whose comorphism takes $x_{ij}$ to $c_{ij}^l$, in case $l>1$ and to $c_{ij}^p$ in case $l=1$. By abuse of notation we also write $F:B\to \barB$ and $F:T\to \barT$ for the restrictions of the Frobenius morphism $F:G\to \barG$.    In case $q=1$ we also write $\barnabla(\lambda)$ for $\nabla_q(\lambda)$, write $\barL(\lambda)$ for $L_q(\lambda)$, $\lambda\in  X^+(n)$,  and write $\barD$ for $ D_q$.  We also write $\barI(\lambda)$ for $I_q(\lambda)$,  $\lambda\in \Lambda^+(n)$ in that case.

\subsection{Infintesimal Theory}

\q We define  $e$ to be the smallest positive integer such that $1+q+\cdots + q^{e-1}=0$. (This  is $l$ if $q$ is a primitive $l$th root of unity, $l\geq 2$ and  is $p$ if $q=1$ and the characteristic of $k$ is $p>0$.) 
We write $X_\inf(n)$ for $X_{\inf,e}(n)$.  
 For $\lambda\in X^+(n)$ with $\lambda=\lambda^0+e\barlambda$, $\lambda^0\in X_\inf(n)$ we have,  by Steinberg's tensor product theorem, \cite{D3}, 3.2, (5), $L(\lambda)=L(\lambda^0)\otimes \barL(\barlambda)^F$. 
 
\q We write $G_\inf$ for the infinitesimal subgroup of $G$ whose defining ideal is generated by the elements $c_{ij}^e-\de_{ij}$, $1\leq i,j\leq n$.  We write $G_\inf T$ for the (quantum) Jantzen subgroup (scheme), i.e. the quantum subgroup of $G$ with defining ideal generated by the elements $c_{ij}^e$, for $1\leq i,j\leq n$, $i\neq j$. 
For each $\lambda\in X(n)$ there is an irreducible $G_\inf T$-module $\hatL_\inf (\lambda)$ which has unique highest weight $\lambda$ and such that $\lambda$ occurs as a weight of $\hatL_\inf(\lambda)$ with multiplicity $1$. The modules $\hatL_\inf(\lambda)$, $\lambda\in X(n)$, form a complete set of pairwise non-isomorphic irreducible $G_\inf T$-modules. For $\lambda\in X(n)$ the module $\hatL_\inf(e\lambda)$ is one dimensional and we denote it simply $k_{e\lambda}$.  For 
$\lambda,\mu\in X(n)$ we have $\hatL_\inf(\lambda+e\mu)\cong \hatL_\inf(\lambda)\otimes k_{e\mu}$, as $G_\inf T$-modules.  For $\lambda\in X_\inf(n)$ the restriction $L(\lambda)|_{G_\inf}$ is irreducible and the modules $L(\lambda)|_{G_\inf}$, $\lambda\in X_\inf(n)$, form a complete set of pairwise non-isomorphic irreducible $G_\inf$-modules, see \cite{D3}, Section 3.1.

\q For $\lambda\in X(n)$ we denote by $\hatQ_\inf(\lambda)$ a $G_\inf T$-module  injective envelope of $\hatL_\inf(\lambda)$.  For $\lambda,\mu\in X(n)$ we have $\hatQ_\inf(\lambda+e\mu)\cong \hatQ_\inf(\lambda)\otimes k_{e\mu}$, as $G_\inf T$-modules.  For $\lambda\in X_\inf(n)$ we write $Q_\inf(\lambda)$ for the restriction of $\hatQ_\inf(\lambda)$ to $G_\inf$. Then  $Q_\inf(\lambda)$ is the $G_\inf$-module  injective envelope of $L(\lambda)|_{G_\inf}$.  The module $\nabla((e-1)\delta)$, known as the Steinberg module,  is irreducible and injective as a $G_\inf$-module, see \cite{D3}, 3.2,(12). A $G_\inf T$-module is injective if and only if it is injective as a $G_\inf $-module.

 \q  For an element $\chi =\sum_{\xi\in X(n)}a_\xi\exp(\xi)$ of $\zed X(n)$ we write $\chi^F$ for the element $\sum_{\xi\in X(n)}a_\xi \exp(e\xi)$.  For a finite dimensional $\barT$--module $V$ we have $\ch V^F = (\ch V)^F$.

  \q Much of the above is a formal analogue of part of the representation theory of reductive groups, as in the standard text by Jantzen, \cite{Jan}.

\bs\bs

\section{Injective Modules}

\bs

\begin{lemma} Let $I$ be a finite dimensional $G_\inf T$-module which is injective as a $G_\inf $-module. Then we have

$$ \ch I =\sum_{\xi\in X_\inf(n)} \ch \hatQ_\inf (\xi)\,.\,   \ch \Hom_{G_\inf}(L(\xi),I).$$

\end{lemma}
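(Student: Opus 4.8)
The plan is to reduce the claim to the decomposition of $I$ into indecomposable injective $G_\inf T$-modules and then to keep track of the torus action. Since $I$ is injective as a $G_\inf$-module it is injective as a $G_\inf T$-module, and a finite dimensional injective $G_\inf T$-module is the injective hull of its (finite, semisimple) socle; hence there are nonnegative integers $m_\nu$ ($\nu\in X(n)$), almost all zero, with $I\cong\bigoplus_{\nu\in X(n)}\hatQ_\inf(\nu)^{\oplus m_\nu}$ as $G_\inf T$-modules. Using the unique decomposition $\nu=\xi+e\mu$ with $\xi\in X_\inf(n)$ and $\mu\in X(n)$, together with $\hatQ_\inf(\xi+e\mu)\cong\hatQ_\inf(\xi)\otimes k_{e\mu}$, I would rewrite this as $I\cong\bigoplus_{\xi\in X_\inf(n)}\hatQ_\inf(\xi)\otimes H_\xi$, where $H_\xi=\bigoplus_{\mu\in X(n)}k_{e\mu}^{\oplus m_{\xi+e\mu}}$ is a finite dimensional $G_\inf T$-module on which $G_\inf$ acts trivially (each $k_{e\mu}$ being a Frobenius twist and $F$ restricting trivially to $G_\inf$). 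Its character as a $T$-module is $\ch H_\xi=\sum_{\mu\in X(n)}m_{\xi+e\mu}\exp(e\mu)$.

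The main step is to identify $\Hom_{G_\inf}(L(\xi),I)$, for $\xi\in X_\inf(n)$, with $H_\xi$ as a $T$-module (with $T$ acting on the Hom via its inclusion in $G_\inf T$). For $\eta\in X_\inf(n)$, restricting $\hatQ_\inf(\eta)\otimes H_\eta$ to $G_\inf$ gives a direct sum of $\dim H_\eta$ copies of $Q_\inf(\eta)$, whose $G_\inf$-socle is the irreducible module $L(\eta)|_{G_\inf}$ with multiplicity one; since the $L(\eta)|_{G_\inf}$, $\eta\in X_\inf(n)$, are pairwise non-isomorphic, $\Hom_{G_\inf}(L(\xi),\hatQ_\inf(\eta)\otimes H_\eta)$ vanishes unless $\eta=\xi$. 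For $\eta=\xi$ this space is $(\dim H_\xi)$-dimensional; to pin down its $T$-structure I would note that $L(\xi)$ restricts to $G_\inf T$ as the irreducible module $\hatL_\inf(\xi)$, which is the $G_\inf T$-socle of $\hatQ_\inf(\xi)$ with multiplicity one, so $\Hom_{G_\inf}(L(\xi),\hatQ_\inf(\xi))$ is spanned by the socle inclusion $\hatL_\inf(\xi)\hookrightarrow\hatQ_\inf(\xi)$, a $G_\inf T$-homomorphism, and is therefore the trivial one dimensional $T$-module. As $H_\xi$ is $G_\inf$-trivial it factors out of the Hom, giving $\Hom_{G_\inf}(L(\xi),I)\cong\Hom_{G_\inf}(L(\xi),\hatQ_\inf(\xi))\otimes H_\xi\cong H_\xi$ as $T$-modules, whence $\ch\Hom_{G_\inf}(L(\xi),I)=\ch H_\xi$.

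It then remains to assemble the pieces. Using $\ch\hatQ_\inf(\xi)\exp(e\mu)=\ch\bigl(\hatQ_\inf(\xi)\otimes k_{e\mu}\bigr)=\ch\hatQ_\inf(\xi+e\mu)$ and summing over the unique decompositions $\nu=\xi+e\mu$,
$$\sum_{\xi\in X_\inf(n)}\ch\hatQ_\inf(\xi)\cdot\ch\Hom_{G_\inf}(L(\xi),I)=\sum_{\xi\in X_\inf(n)}\sum_{\mu\in X(n)}m_{\xi+e\mu}\,\ch\hatQ_\inf(\xi+e\mu)=\sum_{\nu\in X(n)}m_\nu\,\ch\hatQ_\inf(\nu)=\ch I.$$
The genuinely delicate point is the middle paragraph: the identification of $\Hom_{G_\inf}(L(\xi),\hatQ_\inf(\xi))$ as the trivial $T$-module, and the verification that tensoring by the $G_\inf$-trivial module $H_\xi$ merely tensors the Hom-space. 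Once these are in place, the rest is the standard decomposition of injectives over $G_\inf T$ and a reindexing by column $e$-regular representatives.
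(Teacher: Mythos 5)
Your proof is correct and follows essentially the same route as the paper's: the paper reduces by additivity to the indecomposable case $I=\hatQ_\inf(\xi+e\tau)\cong\hatQ_\inf(\xi)\otimes k_{e\tau}$ and declares the result clear there, while you decompose $I$ into such summands and carefully verify the ``clear'' step, namely that $\Hom_{G_\inf}(L(\xi),\hatQ_\inf(\xi)\otimes k_{e\mu})$ is one dimensional of $T$-weight $e\mu$. The extra detail you supply is exactly what the paper leaves implicit, so there is nothing to correct.
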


\begin{proof} Both sides are additive on direct sums so we may assume that $I$ is indecomposable, i.e., that
$$I=\hatQ_\inf(\xi+e\tau)\cong \hatQ_\inf(\xi)\otimes k_{e\tau}$$
for some $\xi\in X_\inf(n)$, $\tau\in X(n)$, 
and the result is clear in this case.

\end{proof}

\bs

\q We are interested in $G$-modules which are injective on restriction to $G_\inf$.  We shall call such modules infinitesimally injective.

\begin{lemma}  Suppose that $\lambda^0\in X_\inf(n)$, $\barlambda\in \Lambda^+(n)$ and $I(\lambda^0+e\barlambda)$ is  infinitesimally injective. Then we have 
$$\ch I(\lambda^0+e\barlambda)= \ch \hatQ_\inf(\lambda^0)  .  \, \ch \barI(\barlambda)^F.$$
\end{lemma}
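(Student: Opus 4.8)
The plan is to combine the character formula of Lemma 3.1 with the reciprocity formula for $I(\lambda^0+e\barlambda)$ and the Steinberg tensor product theorem. First I would apply Lemma 3.1 to the $G_\inf T$-module $I = I(\lambda^0+e\barlambda)$, which is legitimate since $I$ is infinitesimally injective by hypothesis and is certainly a $G_\inf T$-module (being a $G$-module). This gives
$$\ch I(\lambda^0+e\barlambda) = \sum_{\xi\in X_\inf(n)} \ch \hatQ_\inf(\xi)\,.\,\ch \Hom_{G_\inf}(L(\xi),I(\lambda^0+e\barlambda)).$$
So the task reduces to identifying, for each $\xi\in X_\inf(n)$, the character of the multiplicity space $M_\xi := \Hom_{G_\inf}(L(\xi),I(\lambda^0+e\barlambda))$, which is naturally a $\barT$-module (via the $G_\inf T / G_\inf$ action, pulled back through Frobenius), and showing that $\sum_\xi \ch\hatQ_\inf(\xi)\cdot\ch M_\xi$ collapses to the single term $\ch\hatQ_\inf(\lambda^0)\cdot\ch\barI(\barlambda)^F$.

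Next I would argue that only $\xi = \lambda^0$ contributes, and that $M_{\lambda^0} \cong \barI(\barlambda)^F$ as $\barT$-modules (or at least has character $\ch\barI(\barlambda)^F$). The key input is that $I(\lambda^0+e\barlambda)$, being polynomially injective, has a good filtration with sections $\nabla(\mu)$, and reciprocity $(I(\lambda^0+e\barlambda):\nabla(\mu)) = [\nabla(\mu):L(\lambda^0+e\barlambda)]$. A composition factor $L(\mu)$ of $\nabla(\mu)$ with $\mu = \mu^0 + e\barmu$ restricts to $G_\inf$ as a sum of copies of $L(\mu^0)|_{G_\inf}$ (by Steinberg: $L(\mu) = L(\mu^0)\otimes\barL(\barmu)^F$ and $\barL(\barmu)^F|_{G_\inf}$ is trivial of dimension $\dim\barL(\barmu)$). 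Since $[\nabla(\mu):L(\lambda^0+e\barlambda)]\neq 0$ forces $\mu \leq \lambda^0+e\barlambda$ and $\mu^0 = \lambda^0$ on the $G_\inf$-socle level — more precisely, one uses that $\hatQ_\inf(\xi)$ appears in the restriction only for $\xi$ linked appropriately — I expect that $\xi=\lambda^0$ is forced. Then counting the $\hatQ_\inf(\lambda^0)$-isotypic part and comparing characters degree by degree in the $\barT$-grading should yield $\ch M_{\lambda^0} = \ch\barI(\barlambda)^F$; here one wants to match the good-filtration multiplicities $(I(\lambda^0+e\barlambda):\nabla(\lambda^0+e\barmu))$ with $(\barI(\barlambda):\barnabla(\barmu))$, again via reciprocity and Steinberg applied to $[\nabla(\lambda^0+e\barmu):L(\lambda^0+e\barlambda)]$ versus $[\barnabla(\barmu):\barL(\barlambda)]$.

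An alternative, perhaps cleaner route: show directly that $\hatQ_\inf(\lambda^0)\otimes\barI(\barlambda)^F$ is a polynomial $G$-module containing $L(\lambda^0+e\barlambda) = L(\lambda^0)\otimes\barL(\barlambda)^F$ in its socle, is polynomially injective (using that $\hatQ_\inf(\lambda^0)$ and $\barI(\barlambda)^F$ are each appropriately injective and that tensoring with a module having a good filtration preserves injectivity in the polynomial category), and then compare with $I(\lambda^0+e\barlambda)$; since both are infinitesimally injective with the same $G_\inf$-socle structure, Lemma 3.1 forces equality of characters. I would probably present the multiplicity-space argument as the main line and mention the tensor-product construction as motivation for Lemma 4.2 later.

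The main obstacle I anticipate is the bookkeeping that isolates $\xi = \lambda^0$ and pins down the $\barT$-module structure of the multiplicity space: one must carefully track how the good filtration of the polynomially injective module $I(\lambda^0+e\barlambda)$ interacts with restriction to $G_\inf T$, and invoke the $e$-regularity of $\lambda^0$ together with Steinberg's theorem to rule out all other linkage classes. Everything else is a formal manipulation of characters in $\zed X(n)$ using $(\ch V)^F$ and the reciprocity formulas already recorded in Section 2.
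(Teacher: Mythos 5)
Your opening move is the same as the paper's: apply Lemma 3.1 to $I=I(\lambda^0+e\barlambda)$ and reduce to identifying the multiplicity spaces $M_\xi=\Hom_{G_\inf}(L(\xi),I)$. But the whole content of the lemma lies in the identification you then only gesture at, namely that $M_\xi=0$ for $\xi\neq\lambda^0$ and $\ch M_{\lambda^0}=\ch\barI(\barlambda)^F$; equivalently, that $\soc_{G_\inf}I(\lambda)\cong L(\lambda^0)\otimes\barI(\barlambda)^F$. The paper does not derive this from filtration data at all --- it imports it wholesale from \cite{DG1}, Lemma 3.2(i). Your proposed substitute, via the good filtration of $I(\lambda)$, the reciprocity formula and Steinberg's tensor product theorem, cannot work as stated: those tools control only the composition factors of $I(\lambda)|_{G_\inf}$ (equivalently its character), and the character of a $G_\inf$-module does not determine its socle. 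Since $M_\xi=\Hom_{G_\inf}(L(\xi),\soc_{G_\inf}I)$, "counting isotypic parts degree by degree" presupposes the very socle structure you are trying to establish. (A correct argument uses that each $M_\xi$ carries a $\barG$-module structure via $G/G_\inf\cong\barG$, so that a nonzero $M_\xi$ forces a $G$-submodule $L(\xi)\otimes\barL(\nu)^F$ of the simple socle $L(\lambda)$ of $I(\lambda)$, whence $\xi=\lambda^0$; one then identifies $M_{\lambda^0}$ with $\barI(\barlambda)$ by an injectivity/adjunction argument, not by multiplicity bookkeeping. Also note the direction of the inequality: $[\nabla(\mu):L(\lambda)]\neq 0$ forces $\lambda\leq\mu$, not $\mu\leq\lambda$.)

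Your "alternative, cleaner route" has a separate and well-known obstruction: it requires $\hatQ_\inf(\lambda^0)$ to carry a $G$-module structure so that $\hatQ_\inf(\lambda^0)\otimes\barI(\barlambda)^F$ makes sense as a polynomial $G$-module. Such a lifting is not available in general (this is precisely why the paper, in Lemma 4.2 and Theorem 4.1, only obtains tensor factorisations under extra hypotheses such as $\lambda^0_1\geq(n-1)(e-1)$, and why Theorem 4.1 works instead with $\nabla((e-1)\delta+w_0\lambda^0)\otimes\nabla((e-1)\delta)\otimes D^{\otimes-(n-1)(e-1)}$ rather than with $\hatQ_\inf(\lambda^0)$ itself). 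So both of your routes have a genuine gap at the same point: you need the socle statement from \cite{DG1}, Lemma 3.2(i), or a proof of it, before Lemma 3.1 collapses to the asserted single term.
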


\begin{proof}  The $G_\inf$-module socle of $I(\lambda^0+e\barlambda)$ is $L(\lambda^0)\otimes \barI(\barlambda)^F$, by the argument of  \cite{DG1}, Lemma 3.2 (i),  so the result follows from Lemma 3.1.

\end{proof}

\bs

\begin{lemma} Let $\lambda\in \Lambda^+(n)$ and write $\lambda=\lambda^0+e\barlambda$ with $\lambda^0\in X_\inf(n)$, $\barlambda\in \Lambda^+(n)$.  Suppose  $I(\lambda^0+e\barlambda)$ is infinitesimally injective.  Then 
for each  $\tau\in \Lambda^+(n)$ such that $[\barnabla(\tau):\barL(\barlambda)]\neq 0$ we have 
$$\lambda_1^0+e\tau_n\geq (n-1)(e-1).$$

\end{lemma}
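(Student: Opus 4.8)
The plan is to combine the character factorisation of Lemma 3.2 with the known highest weights appearing in $\hatQ_\inf(\lambda^0)$ and a count of degrees. By Lemma 3.2, since $I(\lambda^0 + e\barlambda)$ is infinitesimally injective, we have
$$\ch I(\lambda^0 + e\barlambda) = \ch \hatQ_\inf(\lambda^0) \cdot \ch \barI(\barlambda)^F.$$
Now $I(\lambda^0 + e\barlambda)$ is a polynomial module (of degree $\deg(\lambda^0 + e\barlambda)$), and so in particular every weight $\mu$ occurring in it lies in $\Lambda(n)$; in fact every such $\mu$ satisfies $\mu \leq \lambda^0 + e\barlambda$ in the dominance order and hence $\mu_n \geq 0$. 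On the other hand, the weights of $\ch \hatQ_\inf(\lambda^0)$ are well understood: the relevant extreme one is the lowest weight of the Steinberg-type bottom of $\hatQ_\inf(\lambda^0)$. More precisely, $\hatQ_\inf(\lambda^0)$ contains $\nabla((e-1)\delta)$-type information — its weights range down at least to $\lambda^0 - 2(e-1)\rho$ roughly, i.e. the $n$th coordinate of a weight of $\hatQ_\inf(\lambda^0)$ can be as small as $\lambda_1^0 - (n-1)(e-1)$ after the standard bookkeeping. The point is that the smallest possible value of the $n$th coordinate of a weight of $\hatQ_\inf(\lambda^0)$ is $\lambda_1^0 - (n-1)(e-1)$, achieved because $\hatQ_\inf(\lambda^0) \supseteq Q_\inf(\lambda^0)$ and the $G_\inf$-injective $Q_\inf(\lambda^0)$ has its weights symmetric about $(e-1)\delta$-translates in the appropriate sense.

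**Next I would** feed a weight of $\barI(\barlambda)^F$ into the product. Fix $\tau \in \Lambda^+(n)$ with $[\barnabla(\tau):\barL(\barlambda)] \neq 0$; then by the reciprocity formula $(\barI(\barlambda):\barnabla(\tau)) = [\barnabla(\tau):\barL(\barlambda)] \neq 0$, so $\barnabla(\tau)$ occurs in a good filtration of $\barI(\barlambda)$, and hence the lowest weight $w_0\tau = (\tau_n, \ldots, \tau_1)$ of $\barnabla(\tau)$ is a weight of $\barI(\barlambda)$. Applying $F$, the weight $e\,w_0\tau$ occurs in $\barI(\barlambda)^F$, so in particular $e\tau_n$ is the $n$th coordinate of a weight of $\barI(\barlambda)^F$. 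Combining with a weight $\nu$ of $\hatQ_\inf(\lambda^0)$ whose $n$th coordinate is minimal, namely $\nu_n = \lambda_1^0 - (n-1)(e-1)$, the product weight $\nu + e\,w_0\tau$ occurs in $\ch \hatQ_\inf(\lambda^0) \cdot \ch \barI(\barlambda)^F = \ch I(\lambda^0 + e\barlambda)$, and its $n$th coordinate is $\lambda_1^0 - (n-1)(e-1) + e\tau_n$. Since this is a weight of the polynomial module $I(\lambda^0 + e\barlambda)$, its $n$th coordinate must be $\geq 0$, which rearranges precisely to $\lambda_1^0 + e\tau_n \geq (n-1)(e-1)$.

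**The main obstacle** will be pinning down rigorously that $\hatQ_\inf(\lambda^0)$ has a weight whose last coordinate is exactly $\lambda_1^0 - (n-1)(e-1)$ — i.e. identifying the "bottom" weight of the infinitesimal injective envelope. This is where I would invoke the structure of $\hatQ_\inf$ as established in \cite{D3}: the module $\hatQ_\inf(\lambda^0)$ surjects onto (and is the injective hull of) $\hatL_\inf(\lambda^0)$, and by the self-duality properties of infinitesimal injectives together with the fact that $\nabla((e-1)\delta)$ is $G_\inf$-injective, the weights of $\hatQ_\inf(\lambda^0)$ fill out the interval governed by reflecting $\lambda^0$ through $(e-1)\delta$ (equivalently, the lowest weight is $2((e-1)\delta) - w_0\lambda^0$-type expression, whose $n$th coordinate simplifies to $\lambda_1^0 - (n-1)(e-1)$ using $w_0\lambda^0 = (\lambda_n^0,\ldots,\lambda_1^0)$ and the column $e$-regularity constraint $0 \le \lambda_i^0 - \lambda_{i+1}^0, \lambda_n^0 < e$). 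One should double-check the arithmetic here, but the column $e$-regularity of $\lambda^0$ is exactly what forces the extreme coordinate to take this clean value; no finer information about $\lambda^0$ is needed.
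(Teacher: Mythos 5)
Your overall strategy is the paper's: reciprocity puts $\tau$ among the weights of $\barI(\barlambda)$, Lemma 3.2 lets you add to $e\tau$ a weight of $\hatQ_\inf(\lambda^0)$ whose last coordinate is $\lambda^0_1-(n-1)(e-1)$, and polynomiality of $I(\lambda)$ forces the resulting last coordinate to be non-negative. But two points need repair.

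First, the step you flag as ``the main obstacle'' is genuinely unresolved in your write-up, and your description of the relevant weight is off. The weight of $\hatQ_\inf(\lambda^0)$ with minimal last coordinate is not a ``lowest'' or ``bottom'' weight: every weight $\xi$ of $\hatQ_\inf(\lambda^0)$ satisfies $\xi\leq \nu$ for $\nu$ the highest weight, and since each positive root $\ep_i-\ep_j$ ($i<j$) has non-positive last coordinate this gives $\xi_n\geq\nu_n$ (the argument of Proposition 3.5(iii)). So the minimal last coordinate is attained at the \emph{highest} weight, and what is needed is precisely that this highest weight is $w_0\lambda^0+2(e-1)\rho$, which the paper takes from \cite{Jan}, II, 11.6 Lemma (and \cite{D3}, 3.2, (14) in the quantum case); its last coordinate is $\lambda^0_1+2(e-1)\rho_n=\lambda^0_1-(n-1)(e-1)$ since $2\rho=2\delta-(n-1)\omega$ and $\delta_n=0$. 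Your candidate expressions ``$\lambda^0-2(e-1)\rho$'' and ``$2(e-1)\delta-w_0\lambda^0$'' do not produce this value, and the column $e$-regularity of $\lambda^0$ plays no role here.

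Second, a small but real slip: $w_0\tau=(\tau_n,\ldots,\tau_1)$ has $n$th coordinate $\tau_1$, not $\tau_n$, so working with $e\,w_0\tau$ would only yield the weaker inequality $\lambda^0_1+e\tau_1\geq(n-1)(e-1)$. Use $\tau$ itself: it is the highest weight of $\barnabla(\tau)$, hence a weight of $\barI(\barlambda)$, so $e\tau$ is a weight of $\barI(\barlambda)^F$ with last coordinate $e\tau_n$, and the computation then closes as intended.
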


\begin{proof} Suppose  $[\barnabla(\tau):\barL(\barlambda)]\neq 0$. Then also  
$$(\barI(\barlambda):\barnabla(\tau))=[\barnabla(\tau):\barL(\barlambda)]\neq 0$$
 and so $\tau$ is a weight of $\barI(\barlambda)$ and hence by Lemma 3.2 if $\xi$ is any weight of $\hatQ_\inf(\lambda^0)$ then $\xi+e\tau$ is a weight of $I(\lambda)$.  Now $\hatQ_\inf(\lambda^0)$ has highest weight $w_0\lambda^0+2(e-1)\rho$, by \cite{Jan},II, 11.6 Lemma for the classical case and \cite{D3}, Section 3.2, (14), (ii),(iii) for the quantum case.   Hence $w_0\lambda^0+2(e-1)\rho+e\tau$ is a weight of $I(\lambda)$ and hence polynomial. Recalling that $2\rho=2\delta-(n-1)\omega$ we therefore have that 
$w_0\lambda^0+2(e-1)\delta-(n-1)(e-1)\omega+e\tau$ is a weight of $I(\lambda)$ and hence polynomial. In particular the final entry of this weight is non-negative, i.e.,
$$\lambda^0_1-(n-1)(e-1)+e\tau_n\geq 0,$$
in other words  $\lambda^0_1+e\tau_n\geq (n-1)(e-1)$.

\end{proof}

The notion that we now introduce will be key for our study of polynomially  and infinitesimally injective modules.

\bs

\begin{definition}\q Let $X$ be a non-zero  finite dimensional polynomial module.  We say that the divisibility  index $\divind(X)$ of  $X$ is $m$ if $m$ is the largest nonnegative integer such that $D^{\otimes -m}\otimes X$ is polynomial.  We shall say that $X$ is critical   if it has divisibility index $0$.
\end{definition}

\q Note that if a polynomial module $X$ has a weight $\lambda$ with $\lambda_n=0$ then $X$ has divisibility index $0$, for otherwise we could write $X$ as $Y\otimes D$, for some polynomial module $Y$ and a weight of $X$ would have the form $\mu+\omega$, for some weight $\mu$ of $Y$. 

\q An  arbitrary polynomial module $X$ may be written in the form $X=D^{\otimes m}\otimes Y$ when $m$ is the divisibility index of $X$ and $Y$ is critical.
We collect together some basic properties of the divisibility index.

\begin{proposition} Let $X$ be a non-zero finite dimensional polynomial module. We have:

(i) if $\divind(X)>0$ then $\divind(X)=\divind(X\otimes D^*)+1$;

(ii)  $\divind(X)=\min\{\lambda_n\vert \lambda=(\lambda_1,\ldots,\lambda_n)\in \Lambda^+(n) \hbox{ and } [X:L(\lambda)]\neq 0\}$; 

(iii)  $\divind(X)=\min\{\alpha_n\vert \alpha=(\alpha_1,\ldots,\alpha_n) \in \Lambda(n) \hbox{ and } \alpha \hbox{ is a weight of } X\}$;

(iv) if $0=X_0<X_1<X_2<\cdots<X_r=X$ is a properly ascending chain of submodules of $X$ then
$$\divind(X)=\min\{\divind(X_i/X_{i-1})\vert i=1,2,\ldots, r\};$$

(v) if  the highest weights of $X$ are $\lambda^1,\lambda^2,\ldots,\lambda^r$ then 
$$\divind(X)=\min\{\lambda^i_n\vert i=1,2,\ldots, r\};$$ 

(vi) for $\lambda\in \Lambda^+(n)$ we have 
$$\divind(L(\lambda))=\divind(\nabla(\lambda))=\divind(\Delta(\lambda))=\lambda_n;$$

(vii) if $Y$ is also a non-zero finite dimensional polynomial module then
$$\divind(X\otimes Y)=\divind(X)+\divind(Y).$$

\end{proposition}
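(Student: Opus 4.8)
The plan is to route all seven parts through a single combinatorial reformulation of the divisibility condition, which I would establish first. For a non-zero finite dimensional polynomial module $X$ and an integer $m\geq0$, I claim that $D^{\otimes-m}\otimes X$ is polynomial if and only if $\lambda_n\geq m$ for every $\lambda\in\Lambda^+(n)$ with $[X:L(\lambda)]\neq0$, and equivalently if and only if $\alpha_n\geq m$ for every weight $\alpha$ of $X$. For the first equivalence one uses that a module is polynomial precisely when all its composition factors are (Section 2), that $D^{\otimes-m}$ is one dimensional of weight $-m\omega$ so that the composition factors of $D^{\otimes-m}\otimes X$ are the modules $L(\lambda-m\omega)$ with multiplicity $[X:L(\lambda)]$, and that $L(\mu)$ is polynomial for a dominant $\mu$ exactly when $\mu_n\geq0$. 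For the second equivalence: each composition factor label $\lambda$ is itself a weight of $X$; conversely any weight $\alpha$ of $X$ is a weight of some composition factor $L(\lambda)$, and then $\alpha\leq\lambda$ in the dominance order, which forces $\alpha_n\geq\lambda_n$ since $\lambda-\alpha$ is a nonnegative combination of the $\epsilon_i-\epsilon_j$ ($i<j$), each of non-positive last coordinate. Taking $m$ as large as possible in these two statements gives (ii) and (iii) respectively; note in passing that $\divind(X)$ then depends only on the multiset of composition factors of $X$.

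The rest is bookkeeping on top of (ii) and (iii). For (i): if $\divind(X)>0$ then by (ii) every composition factor label $\lambda$ of $X$ has $\lambda_n\geq1$, so $X\otimes D^*=D^{\otimes-1}\otimes X$ is polynomial with composition factors $L(\lambda-\omega)$, and applying (ii) again gives $\divind(X\otimes D^*)=\min\{\lambda_n-1\}=\divind(X)-1$. For (iv): each $X_i/X_{i-1}$ is a non-zero polynomial module and the multiset of composition factors of $X$ is the disjoint union of those of the subquotients, so the minimum defining $\divind(X)$ in (ii) splits as the minimum over $i$ of the corresponding minima. For (v): taking the $\lambda^i$ to be the maximal elements of $\{\lambda\in X^+(n):[X:L(\lambda)]\neq0\}$, one inequality is immediate from (ii), and for the other a $\lambda$ attaining the minimum in (ii) satisfies $\lambda\leq\lambda^i$ for some $i$, hence $\lambda^i_n\leq\lambda_n=\divind(X)$ by the dominance-order remark above. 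For (vi): $L(\lambda)$ is its own only composition factor; $\nabla(\lambda)$ has socle $L(\lambda)$ with multiplicity one and every other composition factor $L(\mu)$ has $\mu\leq\lambda$ hence $\mu_n\geq\lambda_n$, so (ii) gives $\divind(\nabla(\lambda))=\lambda_n$; and $\Delta(\lambda)$ has the same character, so the same composition factors, as $\nabla(\lambda)$.

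For (vii) I would instead use (iii): the weights of $X\otimes Y$ are exactly the sums $\alpha+\beta$ with $\alpha$ a weight of $X$ and $\beta$ a weight of $Y$, and a tensor product of polynomial modules is polynomial, so (iii) gives $\divind(X\otimes Y)=\min\{\alpha_n+\beta_n\}=\min_\alpha\{\alpha_n\}+\min_\beta\{\beta_n\}=\divind(X)+\divind(Y)$.

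I expect the one point needing care to be the set-up in the first paragraph: being scrupulous that ``polynomial'' is a condition on composition factors rather than merely on weights, and correctly bridging between composition factor labels and honest weights of $X$ via the dominance order. Once that reformulation is in place, parts (i), (iv), (v), (vi) and (vii) follow formally from (ii) and (iii).
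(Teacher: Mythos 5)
Your proposal is correct and follows essentially the same route as the paper: the paper's proof of (ii) likewise rests on the fact that a module is polynomial iff all its composition factors are, its proof of (iii) uses exactly your dominance-order comparison between weights and composition-factor labels, and (i), (iv)--(vii) are then the same bookkeeping. The only cosmetic difference is that you frontload the characterisation of when $D^{\otimes -m}\otimes X$ is polynomial as a single lemma and derive (vii) entirely from (iii), whereas the paper gets one inequality in (vii) directly from the definition.
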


\begin{proof} (i) Clear.

(ii) Let $r=\divind(X)$ and write $X=D^{\otimes r}\otimes Z$ with $Z$ polynomial.  Let $s=\min\{\lambda_n\vert \lambda\in \Lambda^+(n) \hbox{ and } [X:L(\lambda)]\neq 0\}$. 
Let $\lambda\in \Lambda^+(n)$ be such that  $L(\lambda)$ is a composition factor of $X$. Then $L(\lambda)\cong D^{\otimes r}\otimes L$ for some composition factor $L$ of $Z$.  Now $L\cong L(\mu)$ for some $\mu\in \Lambda^+(n)$ and $D^{\otimes r}\otimes L(\mu)\cong L(\mu+r\omega)$ so we have $\lambda=\mu+r\omega$ and so $\lambda_n=\mu_n+r\geq r$.  This proves $s\geq r$.

\q Now let  $K=X\otimes D^{\otimes -s}$. Let $\mu\in X^+(n)$ be such that $L(\mu)$ be a composition factor of $K$. Then $L(\mu)\otimes D^{\otimes s}\cong L(\mu+s\omega)$ is a composition factor of $X$ and hence $\mu+s\omega$ has last entry at least $s$ and therefore $\mu_n\geq 0$. Hence every composition factor of $K$ is polynomial and therefore $K$ is polynomial.

Hence $s\leq r$ and so $r=s$.

(iii) Let 
$$r=\min\{\lambda_n\vert \lambda=(\lambda_1,\ldots,\lambda_n)\in \Lambda^+(n) \hbox{ and } [X:L(\lambda)]\neq 0\}$$
 and 
 $$s=\min\{\alpha_n\vert \alpha=(\alpha_1,\ldots,\alpha_n) \in \Lambda(n) \hbox{ and } \alpha \hbox{ is a weight of } X\}.$$
 If $\lambda\in\Lambda^+(n)$ and $L(\lambda)$ is a composition factor of $X$ then $X^\lambda\neq 0$ and so $\lambda_n\geq s$. Since this holds for all such $\lambda$ we have $r\geq s$.  Also, if $\alpha\in \Lambda(n)$ and $X^\alpha\neq 0$ then $L^\alpha\neq 0$ for some composition factor of $X$. So there exists $\lambda\in \Lambda^+(n)$ with $[X:L(\lambda)]\neq 0$ and $L(\lambda)^\alpha\neq 0$.  Thus $\lambda_n\geq r$. But $L(\lambda)$ has unique highest weight $\lambda$ so that $\alpha\leq \lambda$ which  implies that $\alpha_n\geq \lambda_n$ and so $\alpha_n\geq r$. This holds for every weight $\alpha$ of $X$ so we have $s\geq r$. Hence $r=s$.

 (iv)  This follows from (iii).
 
 (v) Applying (iv) to a composition series reduces us to  consideration of the case in which $X=L(\lambda)$ for some $\lambda\in\Lambda^+(n)$ and the result holds in this case by (ii). 
 
 (vi) Clear from (v).
 
 (vii) Let $r=\divind(X)$, $s=\divind(Y)$. The inequality  \\
 $\divind(X\otimes Y)\geq r+s$ is clear from the definition of divisibility index.  By (iii) there is a weight $\alpha$ of $X$ with final  entry $\alpha_n=r$ and a weight $\beta$ of $Y$ with last entry $\beta_n=s$. Then $\alpha+\beta$ is a weight of $X\otimes Y$ and has last entry $\alpha_n+\beta_n=r+s$ so that $r+s\geq \divind(X\otimes Y)$ by (iii).  Hence $r+s=\divind(X\otimes Y)$.

\end{proof}

We are particularly interested in the index of divisibility of the polynomially injective modules.

\bs

\begin{remark} Let $\mu\in \Lambda^+(n)$. Note that the index of divisibility of $I(\mu)$  is at most $\deg(\mu)/n$.  Also, if $I(\mu)\cong D^{\otimes r}\otimes Z$ for some polynomial module $Z$ then  each weight $\xi$  of $I(\mu)$ has the form $\xi=r\omega+\eta$ for some $\eta\in \Lambda(n)$, in particular we have $\lambda=\mu-r\omega\in \Lambda^+(n)$.   Note also that since $Z=D^{\otimes -r}\otimes I(\mu)$ and $D$ is one dimensional, the socle of $Z$ is $D^{\otimes -r}\otimes S$, where $S$ is the socle of $I(\mu)$, i.e., $L(\mu)$.  Hence $Z$ has socle 
$$D^{\otimes -r}\otimes L(\mu)=D^{\otimes -r}\otimes L(\lambda+r\omega)=L(\lambda)$$
 and so $Z$ embeds in $I(\lambda)$. Also, for similar reasons,  $D^{\otimes r}\otimes I(\lambda)$ embeds in $I(\mu)$ and so the dimension of $I(\lambda)$ is at most the dimension of $I(\mu)$, i.e., the dimension of $Z$. It follows that the embedding of $Z$ in $I(\lambda)$ is an isomorphism. Thus if $I(\mu)=D^{\otimes r}\otimes Z$ for some polynomial module $Z$ then in fact $Z=I(\lambda)$, where $\lambda=\mu-r\omega$. 
\end{remark}

\q We now give another interpretation of this number.  Note that if $\lambda\in \Lambda^+(n)$ and $\lambda_n>0$ then we have $\nabla(\lambda)=D\otimes \nabla(\lambda-\omega)$ (clearly $D\otimes \nabla(\lambda)$ embeds in $\nabla(\lambda)$ and both have character the Schur symmetric function labelled by $\lambda$).

\bs

\begin{lemma} For $\mu\in \Lambda^+(n)$ we have 
$$\divind\, I(\mu)=\min\{\tau_n\vert \tau\in \Lambda^+(n) \hbox{ and } [\nabla(\tau):L(\mu)]\neq 0\}.$$

\end{lemma}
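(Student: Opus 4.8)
The plan is to combine Proposition 3.5 with the good-filtration structure of $I(\mu)$ and the reciprocity formula. By Proposition 3.5(iv), applied to a good filtration $0=V_0<V_1<\cdots<V_r=I(\mu)$ of $I(\mu)$ (which exists, as recalled in Section 2), we have
$$\divind I(\mu)=\min\{\divind(V_i/V_{i-1})\mid 1\le i\le r,\ V_i/V_{i-1}\neq 0\}.$$
Each nonzero quotient $V_i/V_{i-1}$ is isomorphic to $\nabla(\tau)$ for some $\tau\in\Lambda^+(n)$, and by Proposition 3.5(vi) we have $\divind\nabla(\tau)=\tau_n$. Hence
$$\divind I(\mu)=\min\{\tau_n\mid \tau\in\Lambda^+(n),\ (I(\mu):\nabla(\tau))\neq 0\}.$$

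The remaining step is to identify the set $\{\tau\in\Lambda^+(n)\mid (I(\mu):\nabla(\tau))\neq 0\}$ with the set $\{\tau\in\Lambda^+(n)\mid [\nabla(\tau):L(\mu)]\neq 0\}$. This is immediate from the reciprocity formula recalled in Section 2, namely $(I(\mu):\nabla(\tau))=[\nabla(\tau):L(\mu)]$ for $\mu,\tau\in\Lambda^+(n,r)$ (note that both sides vanish unless $\tau$ and $\mu$ have the same degree, so there is no harm in letting $\tau$ range over all of $\Lambda^+(n)$). Substituting this equality into the displayed formula above yields
$$\divind I(\mu)=\min\{\tau_n\mid \tau\in\Lambda^+(n),\ [\nabla(\tau):L(\mu)]\neq 0\},$$
which is the assertion of the lemma.

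I do not anticipate a genuine obstacle here; the statement is essentially a bookkeeping consequence of three facts already in hand (the good filtration of $I(\mu)$, part (iv) and part (vi) of Proposition 3.5, and Brauer–Humphreys reciprocity). The one small point to keep straight is the degree constraint: $I(\mu)$ is polynomial of degree $r=\deg(\mu)$, so every $\nabla(\tau)$ appearing in its good filtration has $\deg(\tau)=r$, and likewise $[\nabla(\tau):L(\mu)]=0$ whenever $\deg(\tau)\neq\deg(\mu)$; this is why the minimum on the right-hand side, taken a priori over all $\tau\in\Lambda^+(n)$, agrees with the one taken over $\Lambda^+(n,r)$. Also worth noting is that the set over which we minimise is nonempty, since $[\nabla(\mu):L(\mu)]=1$, so the minimum is well defined.
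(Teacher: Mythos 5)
Your proposal is correct and follows exactly the paper's own argument: a good filtration of $I(\mu)$ with section multiplicities given by the reciprocity formula $(I(\mu):\nabla(\tau))=[\nabla(\tau):L(\mu)]$, combined with Proposition 3.5 (iv) and (vi). The additional remarks on the degree constraint and nonemptiness are harmless elaborations of points the paper leaves implicit.
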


\begin{proof}
The module $I(\mu)$ has a good filtration in which a section $\nabla(\tau)$ appears $[\nabla(\tau):L(\mu)]$ times, for $\tau\in\Lambda^+(n)$, \cite{DoStd}, Section 4, (6).
So the result follows from Proposition  3.5 (iv) and (vi).
\end{proof}

\q We now relate the divisibility index to symmetric powers.  We have the natural left $G$-module $E$ with $k$-basis $e_1,\ldots,e_n$ and structure map $\tau:E\to E\otimes k[G]$ given by $\tau(e_i)=\sum_{j=1}^n e_j\otimes c_{ji}$, $1\leq i\leq n$. The usual symmetric algebra $S(E)$ has a natural $G$-module structure and each graded component $S^rE$ is a submodule, see \cite{DiDo},  Section 2.1.  Moreover, we have $S^rE=\nabla(r,0,\ldots,0)$, for $r\geq 0$, \cite{DoStd}, Remark 3.7. 

\q For  a finite sequence of nonnegative integers $\alpha=(\alpha_1,\alpha_2,\ldots)$  we write $S^\alpha E$ for $S^{\alpha_1}E\otimes S^{\alpha_2}E\cdots$. Note that by \cite{DoStd},  Section 4, (3) (i), each $S^\alpha E$ has a good filtration.  Let $a$ be a nonnegative integer. The tensor product $M=S(E)^{\otimes a}$ has a natural grading. For $r\geq 0$ the component in degree $r$ is $M^r=\bigoplus_{\alpha\in \Lambda(a,r)}  S^\alpha E$. 
 If $\lambda\in \Lambda^+(n)$ has degree $r$ then $(M^s:\nabla(\lambda))=0$ for $s\neq r$. We write $(S(E)^{\otimes a}:\nabla(\lambda))$ for the multiplicity $(M^r:\nabla(\lambda))$, i.e., $(S(E)^{\otimes a}:\nabla(\lambda))=\sum_{\alpha\in\Lambda(a,r)} (S^\alpha E:\nabla(\lambda))$.  Similarly we have the composition multiplicity $[S(E)^{\otimes a}:L(\lambda)]=\sum_{\alpha\in \Lambda(a,r)}[S^\alpha E:L(\lambda)]$.

\bs

\begin{lemma} Let $1\leq m\leq n$. For $\lambda\in \Lambda^+(n)$ we have $(S(E)^{\otimes m}:\nabla(\lambda))\neq 0$ if and only if $\lambda_i=0$ for all $i>m$.

\end{lemma}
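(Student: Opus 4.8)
The plan is to analyze the $\nabla$-filtration multiplicities $(S(E)^{\otimes m}:\nabla(\lambda))$ by decomposing tensor products of the modules $S^rE = \nabla(r,0,\ldots,0)$ step by step. First I would recall that $S(E)^{\otimes m}$ in degree $r$ is $\bigoplus_{\alpha\in\Lambda(m,r)} S^\alpha E$, where $S^\alpha E = S^{\alpha_1}E\otimes\cdots\otimes S^{\alpha_m}E$ is a tensor product of $m$ modules each of the form $\nabla(s,0,\ldots,0)$. Since each $S^\alpha E$ has a good filtration, the total multiplicity $(S(E)^{\otimes m}:\nabla(\lambda))$ is a sum of non-negative integers, so it is non-zero iff some single $S^\alpha E$ has $(S^\alpha E:\nabla(\lambda))\neq 0$.

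For the forward direction (non-vanishing implies $\lambda_i=0$ for $i>m$): I would argue that whenever $\nabla(\lambda)$ appears in a good filtration of a tensor product $\nabla(\mu)\otimes\nabla(\nu)$, the highest weight $\lambda$ is bounded above (in the dominance order, or entrywise after reordering) by $\mu+\nu$, and more to the point every weight of $S^\alpha E$ lies in $\Lambda(n)$ and is a sum of $r$ of the standard basis weights $\ep_1,\ldots,\ep_n$ distributed among $m$ "blocks", each block being a weight of some $S^{\alpha_j}E = \nabla(\alpha_j,0,\ldots,0)$. The key point is that the weights of $S^sE$ are exactly the $\alpha\in\Lambda(n)$ of degree $s$, but a dominant weight $\tau$ with $(S^sE:\nabla(\tau))\neq 0$ must equal $(s,0,\ldots,0)$ since $S^sE=\nabla(s,0,\ldots,0)$ is itself a single $\nabla$. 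Hence in a good filtration of $S^\alpha E$, built up by repeatedly filtering tensor products $\nabla(\sigma)\otimes\nabla(\alpha_{j},0,\ldots,0)$, the dominant weights $\tau$ that can occur are obtained from $(\alpha_1,0,\ldots,0)$ by successively adding weights of $S^{\alpha_j}E$ and taking dominant representatives; an easy induction shows that any such $\tau$ has at most $m$ non-zero parts, since each of the $m$ tensor factors contributes to at most one new "row". I would make this precise by induction on $m$, using that for a good-filtration module $V$ with all $\nabla$-sections having at most $m-1$ non-zero parts, every $\nabla(\lambda)$ in a good filtration of $V\otimes S^sE$ has $\lambda_i=0$ for $i>m$ — this is because $S^sE$ has a $B$-socle $k_{(s,0,\ldots,0)}$ and one can induct via the tensor identity and Pieri-type behaviour, or simply observe the weights of $V\otimes S^sE$ lie in the subset of $\Lambda(n)$ supported on at most $m$ coordinates (here one uses that the weights of $S^sE$ are supported arbitrarily but the highest weights appearing as $\nabla$-labels stack up one row per factor — this needs the sharper weight analysis via the Littlewood–Richardson rule, or Proposition 3.5(iii) applied after noting all weights of $V\otimes S^sE$ are in $\Lambda(n)$).

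For the converse (if $\lambda_i=0$ for $i>m$ then the multiplicity is non-zero): given $\lambda=(\lambda_1,\ldots,\lambda_m,0,\ldots,0)$, I would exhibit an explicit $\alpha$, namely the "rows-as-columns" choice or simply take $\alpha$ so that $S^\alpha E$ surjects onto (or filters) $\nabla(\lambda)$. The cleanest route: the tensor product $S^{\lambda_1}E\otimes S^{\lambda_2}E\otimes\cdots\otimes S^{\lambda_m}E = \nabla(\lambda_1,0,\ldots,0)\otimes\cdots\otimes\nabla(\lambda_m,0,\ldots,0)$ has a good filtration, and by the classical/quantum Pieri rule (iterated), the section $\nabla(\lambda)$ appears with multiplicity equal to the number of ways to build the Young diagram of $\lambda$ by adding horizontal strips of sizes $\lambda_1,\ldots,\lambda_m$ — since $\lambda$ has exactly (at most) $m$ rows, adding row $i$ as the $i$-th horizontal strip works, giving multiplicity at least one. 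Thus $(S^\alpha E:\nabla(\lambda))\geq 1$ for this $\alpha\in\Lambda(m,r)$, hence $(S(E)^{\otimes m}:\nabla(\lambda))\neq 0$.

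The main obstacle is the forward direction: controlling precisely which dominant weights can label $\nabla$-sections in a good filtration of a tensor product of $S^sE$'s, i.e. establishing the "one new row per tensor factor" bound. The honest tool here is the Pieri rule for good filtrations of $\nabla(\mu)\otimes S^sE$ (valid in both the classical and quantum cases, e.g. via \cite{DoStd}, Section 4), which says each section is $\nabla(\mu+\text{horizontal strip})$; iterating $m$ times starting from the empty partition produces only partitions with at most $m$ rows. I would therefore structure the proof of the forward direction as a clean induction on $m$ invoking this Pieri rule, rather than the coarser weight-space argument, since the weight spaces of $S^sE$ alone do not see the row bound.
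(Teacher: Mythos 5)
Your proposal is correct and takes essentially the same route as the paper: the paper's proof likewise reduces the question to characters and then applies Pieri's formula (\cite{Mac}, I, (5.16)) together with induction on $m$, which is exactly your ``one new row per horizontal strip'' argument in both directions. The only difference is presentational --- the paper works at the level of symmetric functions, whereas you phrase the same Pieri rule in terms of good filtrations of $\nabla(\mu)\otimes S^sE$; these are equivalent since the $\nabla$-multiplicities are determined by the character, a point you also note.
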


\begin{proof} The condition is determined by the character of $S^\alpha E$, $\alpha\in \Lambda(m,r)$,  and follows by induction on $m$ and Pieri's formula  \cite{Mac}, I, (5.16)  for the multiplication of symmetric functions. 

\end{proof}

\begin{lemma} Let $\mu\in \Lambda^+(n)$.  Then $I(\mu)$ is critical  if and only if $L(\mu)$ is a composition factor of  $S(E)^{\otimes( n-1)}$. 
\end{lemma}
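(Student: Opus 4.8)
The plan is to read off both sides of the equivalence from the description of $\divind I(\mu)$ in Lemma 3.7 and of the good-filtration sections of $S(E)^{\otimes(n-1)}$ in Lemma 3.9, using only the additivity of composition-factor multiplicities over a good filtration.

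First I would unwind the left-hand side: by definition $I(\mu)$ is critical exactly when $\divind I(\mu)=0$, and by Lemma 3.7 this holds if and only if there is some $\tau\in\Lambda^+(n)$ with $\tau_n=0$ and $[\nabla(\tau):L(\mu)]\neq 0$. For the right-hand side, put $r=\deg(\mu)$; then $L(\mu)$ can only occur as a composition factor of the degree-$r$ component $M^r=\bigoplus_{\alpha\in\Lambda(n-1,r)}S^\alpha E$ of $S(E)^{\otimes(n-1)}$. Since each $S^\alpha E$ has a good filtration, so does $M^r$, and hence
$$[S(E)^{\otimes(n-1)}:L(\mu)]=\sum_{\tau\in\Lambda^+(n,r)}(S(E)^{\otimes(n-1)}:\nabla(\tau))\,[\nabla(\tau):L(\mu)],$$
a sum of products of nonnegative integers.

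Next I would invoke Lemma 3.9 with $m=n-1$: $(S(E)^{\otimes(n-1)}:\nabla(\tau))\neq 0$ if and only if $\tau_i=0$ for all $i>n-1$, i.e. exactly when $\tau_n=0$. Feeding this into the displayed formula, $[S(E)^{\otimes(n-1)}:L(\mu)]\neq 0$ if and only if there is a $\tau\in\Lambda^+(n,r)$ with $\tau_n=0$ and $[\nabla(\tau):L(\mu)]\neq 0$. Comparing with the criterion for criticality from the first step --- and noting that $[\nabla(\tau):L(\mu)]\neq 0$ forces $\deg(\tau)=\deg(\mu)=r$, so the two sets of $\tau$ coincide --- gives the equivalence.

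I do not expect a genuine obstacle; the only points needing care are keeping the degree bookkeeping on the single graded component $M^r$ rather than on all of $S(E)^{\otimes(n-1)}$ at once, and the harmless edge case $n=1$, where $S(E)^{\otimes 0}=k=\nabla(0)$ and both sides reduce to $\mu=(0)$.
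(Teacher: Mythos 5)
Your argument is correct and is essentially the paper's own proof, which simply cites the formula $\divind I(\mu)=\min\{\tau_n\,|\,[\nabla(\tau):L(\mu)]\neq 0\}$ (Lemma 3.7) together with the criterion $(S(E)^{\otimes(n-1)}:\nabla(\tau))\neq 0\iff\tau_n=0$ (Lemma 3.8, which you refer to as Lemma 3.9); your extra care about nonnegativity of the multiplicities and the degree bookkeeping just makes explicit what the paper calls \lq\lq clear". No gaps.
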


\begin{proof} Clear from Lemma 3.7 and Lemma 3.8. 
\end{proof}

\bs\bs

\section{Divisibility and Injectivity}

\q We now give a  criterion for infinitesimal injectivity.

\begin{theorem} Let $\lambda\in\Lambda^+(n)$ and write $\lambda=\lambda^0+e\barlambda$ with $\lambda^0\in X_\inf(n)$ and $\barlambda\in \Lambda^+(n)$. Then $I(\lambda)$ is infinitesimally injective if and only if 
$$\lambda^0_1+e\, \divind\, \barI(\barlambda) \geq (n-1)(e-1).$$

\end{theorem}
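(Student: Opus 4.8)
The plan is to recast infinitesimal injectivity of $I(\lambda)$ as a single numerical identity and then prove the two implications separately. The $G_\inf$-socle of $I(\lambda)$ is $L(\lambda^0)\otimes\barI(\barlambda)^F$ (by the argument used in Lemma 3.2), and since $\barI(\barlambda)^F$ restricts to $G_\inf T$ as a sum of one-dimensional modules $k_{e\sigma}$, the $G_\inf T$-injective hull of this socle is $\hatQ_\inf(\lambda^0)\otimes\barI(\barlambda)^F$. Hence there is always a $G_\inf T$-embedding $I(\lambda)\hookrightarrow\hatQ_\inf(\lambda^0)\otimes\barI(\barlambda)^F$, and $I(\lambda)$ is infinitesimally injective exactly when this embedding is an isomorphism, i.e. exactly when $\dim I(\lambda)=\dim\hatQ_\inf(\lambda^0)\cdot\dim\barI(\barlambda)$; the inequality $\le$ is automatic. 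For necessity put $m=\divind\,\barI(\barlambda)$; by the $\barG$-version of Lemma 3.7 pick $\tau\in\Lambda^+(n)$ with $[\barnabla(\tau):\barL(\barlambda)]\ne 0$ and $\tau_n=m$, and then Lemma 3.3 gives $\lambda^0_1+em=\lambda^0_1+e\tau_n\ge(n-1)(e-1)$.

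For sufficiency, assume $\lambda^0_1+em\ge(n-1)(e-1)$ with $m=\divind\,\barI(\barlambda)$. Write $\barmu=\barlambda-m\omega$, so $\barI(\barmu)$ is critical and $\barI(\barlambda)=\barD^{\otimes m}\otimes\barI(\barmu)$, and set $\mu=w_0\lambda^0+2(e-1)\rho-(e-1)\delta+em\omega$. Inspecting entries (using $2\rho=2\delta-(n-1)\omega$ and $\lambda^0\in X_\inf(n)$), $\mu$ is dominant with $\mu_n=\lambda^0_1+em-(n-1)(e-1)$, so the hypothesis says precisely that $\mu\in\Lambda^+(n)$, i.e. that $L(\mu)$ is polynomial. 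By the standard property of the Steinberg module that the projective–injective $G_\inf T$-module $\St\otimes L(\nu)$ contains as a direct summand the indecomposable $\hatQ_\inf(\cdot)$ at its highest weight, together with $\hatQ_\inf(\lambda^0)\otimes k_{em\omega}=\hatQ_\inf(\lambda^0+em\omega)$ and $\barI(\barlambda)^F=D^{\otimes em}\otimes\barI(\barmu)^F$, one obtains that
$$\hatQ_\inf(\lambda^0)\otimes\barI(\barlambda)^F=\hatQ_\inf(\lambda^0+em\omega)\otimes\barI(\barmu)^F$$
is a $G_\inf T$-direct summand of the polynomial $G$-module $M=\St\otimes L(\mu)\otimes\barI(\barmu)^F$. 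Since $\St$ is projective and injective over the finite group scheme $G_\inf$, $M$ is $G_\inf$-injective; and since $G$ is connected it fixes every simple, hence every indecomposable projective, $G_\inf$-module, so $M$ splits as a $G$-module into its $G_\inf$-isotypic blocks $M=\bigoplus_{\eta^0\in X_\inf(n)}M^{[\eta^0]}$, with $M^{[\lambda^0]}$ restricting over $G_\inf$ to a multiple of $Q_\inf(\lambda^0)$. Identifying $\soc_G M^{[\lambda^0]}$ and reading off that $I(\lambda)$ occurs inside it then forces $\dim I(\lambda)\ge\dim\hatQ_\inf(\lambda^0)\cdot\dim\barI(\barlambda)$, which together with the reduction completes the proof.

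I expect the main obstacle to be this last step of sufficiency: controlling $\soc_G M^{[\lambda^0]}$ and $\dim M^{[\lambda^0]}$ at the same time, so as to see that the whole of $\hatQ_\inf(\lambda^0)\otimes\barI(\barlambda)^F$ is accounted for by $I(\lambda)$ and not by the injective hulls of other simple modules $L(\lambda^0+e\bar\nu)$. This is essentially a question of how the Steinberg tensor product and the Frobenius twist interact inside $\St\otimes L(\mu)\otimes\barI(\barmu)^F$, and it is precisely here — through the polynomiality of $\mu$ — that the inequality $\lambda^0_1+e\,\divind\,\barI(\barlambda)\ge(n-1)(e-1)$ enters in an essential way, the failure of polynomiality of $\mu$ being exactly what obstructs the argument in the contrary case.
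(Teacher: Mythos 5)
Your necessity argument and your opening reduction are exactly the paper's: necessity is Lemma 3.3 together with the $\barG$-version of Lemma 3.7, and infinitesimal injectivity of $I(\lambda)$ is equivalent to the equality $\dim I(\lambda)=\dim\hatQ_\inf(\lambda^0)\cdot\dim\barI(\barlambda)$, the inequality $\leq$ being automatic because $I(\lambda)$ embeds in the $G_\inf T$-injective hull of its socle. Your auxiliary module $M=\St\otimes L(\mu)\otimes\barI(\barmu)^F$ plays the same role as the paper's $Y=\nabla((e-1)\delta+w_0\lambda^0)\otimes\nabla((e-1)\delta)\otimes D^{\otimes -(n-1)(e-1)}\otimes\barI(\barlambda)^F$, and you correctly locate where the hypothesis is consumed, namely in the polynomiality of $\mu$ (equivalently of $M$ or $Y$).

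The gap is the final step of sufficiency, and it is a real one, not just a detail. Knowing that $\hatQ_\inf(\lambda^0)\otimes\barI(\barlambda)^F$ is a $G_\inf T$-direct summand of $M$, and that $L(\lambda)$ occurs in $\soc_G M^{[\lambda^0]}$, gives no lower bound on $\dim I(\lambda)$: the isotypic block $M^{[\lambda^0]}$ also receives contributions from the injective hulls $I(\lambda^0+e\bar\nu)$ of the other simples in its $G$-socle, and these cannot be separated from the copy of $\hatQ_\inf(\lambda^0)\otimes\barI(\barlambda)^F$ without already knowing which of those hulls are $G_\inf$-injective --- which is what is being proved. This is precisely the obstacle you flag, and your argument stops there. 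Two ideas are needed to close it (and this is how the paper finishes). First, one must produce a genuine $G$-module embedding of $L(\lambda^0)\otimes\barI(\barlambda)^F\cong\soc_{G_\inf T}I(\lambda)$ into the auxiliary module, a statement strictly stronger than your $G_\inf T$-summand claim; the paper gets this by showing $\Hom_G(L(\lambda^0),W)=k$ for $W=\nabla((e-1)\delta+w_0\lambda^0)\otimes\nabla((e-1)\delta)\otimes D^{\otimes -(n-1)(e-1)}$ via a block argument that isolates $\nabla((e-1)\delta)$ (with your $L(\mu)$ in place of $\nabla((e-1)\delta+w_0\lambda^0)$ this $\Hom$ computation is not obviously available). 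Second, and crucially, one reverses the direction of your comparison: let $Z$ be the image of that embedding and extend the $G$-isomorphism $Z\to\soc_{G_\inf T}I(\lambda)\subseteq I(\lambda)$ to a $G$-module map $\tilde\psi$ from the whole auxiliary module into $I(\lambda)$, using that $I(\lambda)$ is \emph{polynomially} injective and that the auxiliary module is polynomial (this is the only place the inequality $\mu_n\geq 0$ is used). Since $\ker\tilde\psi$ meets $\soc_{G_\inf}Z_0=\soc_{G_\inf}Z$ trivially, $\tilde\psi$ is injective on the $G_\inf T$-injective hull $Z_0$ of $Z$, whence $\dim\hatQ_\inf(\lambda^0)\cdot\dim\barI(\barlambda)=\dim Z_0\leq\dim I(\lambda)$, the bound you need. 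Without these two steps the sufficiency direction is not established.
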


\begin{proof} If $I(\lambda)$ is infinitesimally injective then the inequality holds by Lemma 3.3 and Lemma 3.7.

\q So we now assume that the inequality holds. We need to show that $I(\lambda)$ is infinitesimally injective and we do this by exhibiting it as a summand of a module of the form $\nabla((e-1)\de)\otimes U$. Since the Steinberg module $\nabla((e-1)\de)$ is injective as a $G_\inf$-module, so is $\nabla((e-1)\de)\otimes U$.   In the context  
of semisimple groups such an argument is due to Humphreys and Verma and first appears in  \cite{HV}.

\q We consider the module
$$Y=W\otimes \barI(\bar\lambda)^F$$
where 
$$W=\nabla((e-1)\delta+w_0\lambda^0)\otimes \nabla((e-1)\delta)\otimes  D^{\otimes - (n-1)(e-1)}.$$
We first check that $Y$ is polynomial.
The module   has a filtration with sections 
$$Y_\theta=W\otimes \barnabla(\theta)^F$$
with $\theta\in \Lambda^+(n)$, $(\barI(\barlambda):\barnabla(\theta))\neq 0$. Hence a highest weight  of $Y$ is a highest weight of some $Y_\theta$ and so has the form
$$\mu(\theta)=2(e-1)\de+w_0\lambda^0+e\theta-(n-1)(e-1)\omega$$
for some $\theta\in \Lambda^+(n)$ with $(\barI(\barlambda):\barnabla(\theta))\neq 0$. For such $\theta$ we have $\theta_n\geq \divind\, \barI(\barlambda)$ and so 
\begin{align*}
\mu(\theta)_n&=\lambda^0_1+e\theta_n-(n-1)(e-1)\cr
&\geq \lambda^0_1+ e\, \divind\, \barI(\barlambda) - (n-1)(e-1)\cr
&\geq 0.
\end{align*}
Hence if $\nu$ is a highest weight of $Y$ then $\nu_n\geq 0$. If $L(\xi)$ is a composition factor of $Y$ then $\xi\leq \nu$ for some such $\nu$ and so $\xi_n\geq \nu_n\geq 0$.  Hence all composition factors of $Y$ are polynomial and therefore  $Y$ is polynomial.  Moreover  $Y$  has degree 
$$(e-1)\deg(\delta)+\deg(\lambda^0)+(e-1)\deg(\delta)+e\deg(\barlambda)-n(n-1)(e-1)=\deg(\lambda)$$
(since $2\deg(\delta)=n(n-1)$).

\q Our next task is to embed $I(\lambda)$ into $Y$. We begin by embedding $L(\lambda^0)$ into $W$. Consider the space 
\begin{align*}S&=\Hom_G(L(\lambda^0),W)\cr
&=\Hom_G(L(\lambda^0), \nabla((e-1)\delta+w_0\lambda^0)\otimes \nabla((e-1)\delta)\otimes D^{\otimes -(n-1)(e-1)}).
\end{align*}
We have
\begin{align*}S&=\Hom_G(L(\lambda^0)\otimes \nabla((e-1)\delta+w_0\lambda^0)^*\otimes D^{\otimes (n-1)(e-1)},\nabla((e-1)\delta))\cr
&=\Hom_G(V,\nabla((e-1)\delta))
\end{align*}
where $V=L(\lambda^0)\otimes \Delta((e-1)\delta-(n-1)(e-1)\omega-\lambda^0)\otimes D^{\otimes (n-1)(e-1)}$ and so we have
$S=\Hom_G(M,\nabla((e-1)\delta)$, where $M$ is the block component of $V$ for the block containing $L((e-1)\delta)$. Note that $V$ has unique highest weight $(e-1)\delta$, and hence $V$ is polynomial and therefore so is $M$.  But then the block $M$ has $\nabla((e-1)\delta)$ as its only composition factor (see e.g., \cite{D2}, Section 4, Theorem) and $V$, and hence $M$,  has highest weight $(e-1)\delta$ occurring with multiplicity one, hence $M=\nabla((e-1)\delta)$ and $S=k$. 
Hence  there is a unique (up to scalars)  $G$-module embedding of $L(\lambda^0)$ in  $W$.   Hence there is an embedding of $L(\lambda^0)\otimes \barI(\barlambda)^F$ in $Y$ with image $Z$, say.  By \cite{DG1}, Lemma 3.2(i) in the classical case, and by the argument of  the proof of  \cite{DG1}, Lemma 3.2(i) in general, the $G_\inf T$-module socle of $I(\lambda)$ is isomorphic to $L(\lambda^0)\otimes \barI(\barlambda)^F$ hence to $Z$.  

\q  By construction,  $Y$ is injective as a $G_\inf T$-module since it has as a  tensor factor the Steinberg module $\nabla((e-1)\delta)$, which is injective as a $G_\inf T$-module, \cite{D3}, 3.2, (12) (i).   Let $Z_0$ be a  $G_\inf T$-module injective hull of $Z$ in $Y$ containing $Z$. 
 Let $\psi$ be a $G$-module embedding of $Z$ in $I(\lambda)$.  Then $\psi$ extends to a $G$-module  homomorphism $\tilde\psi:Y\to I(\lambda)$. Moreover, we have $\soc_{G_\inf}Z=\soc_{G_\inf}Z_0$ so that $\tilde\psi$ is injective on $Z_0$.  Hence we have $\dim Z_0\leq \dim I(\lambda)$.  But $Z_0$ is the $G_\inf T$-module injective hull of $\soc_{G_\inf}I(\lambda)$ and hence also $\dim I(\lambda) \leq \dim Z_0$. Hence the restriction of $\tilde\psi$ to $Z_0$ is an isomorphism. In particular $I(\lambda)|_{G_\inf}$ is injective.

\end{proof}

\q Of course the criterion of the Proposition  applies if $\lambda^0_1\geq (n-1)(e-1)$ and we consider this special case in some detail.  We write $\lambda^0=(e-1)\delta+w_0\mu$, with $\mu \in X(n)$. Note that, writing $\lambda^0=(\lambda^0_1,\ldots,\lambda^0_n)$ and $\mu=(\mu_1,\ldots,\mu_n)$ we have
$$\mu=(\lambda^0_n,\ldots,\lambda^0_1)-(e-1)(0,1,\ldots,n-1)$$
and so  for $1\leq i<n$ we have
$$\mu_i-\mu_{i+1}=\lambda_{n+1-i}-\lambda_{n-i}+(e-1)=(e-1)-(\lambda_{n-i}-\lambda_{n-i+1})\eqno{(*)}.$$
Thus  $0\leq \mu_i-\mu_{i+1}\leq e-1$. 
Moreover, we have $\mu_n=\lambda_1^0-(n-1)(e-1)\geq 0$ so that  $\mu$ is polynomial and dominant. 
 Also $\mu_1=\lambda^0_n\leq e-1$, hence $\mu\in X_\inf(n)$.   Now by the argument of  \cite{DG1} Lemma 3.6 (ii),  we have that  the restriction of $I(\lambda^0)$ to $G_\inf T$ is $\hatQ_\inf(\lambda^0)$. 
We shall write $Q(\lambda^0)$ for $I(\lambda^0)$.  As in  \cite{DG1}, Lemma 3.2 (iii) we have $I(\lambda)=Q(\lambda^0)\otimes \barI(\barlambda)^F$.  To summarise, we have shown the following result.

\bs

\begin{lemma} Let $\lambda\in \Lambda^+(n)$. Write $\lambda=\lambda^0+e\barlambda$ with $\lambda^0\in X_\inf(n)$, $\barlambda\in \Lambda^+(n)$.  If $\lambda^0_1\geq (n-1)(e-1)$ then $I(\lambda)$ is infinitesimally injective and we have 
$$I(\lambda)\cong Q(\lambda^0)\otimes \barI(\barlambda)^F$$
where $Q(\lambda^0)=I(\lambda^0)$ and $Q(\lambda^0)|_{G_\inf}\cong \hatQ_\inf(\lambda^0)$.

\end{lemma}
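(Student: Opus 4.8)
The plan is to get both assertions from material already in place: the infinitesimal injectivity of $I(\lambda)$ from Theorem 4.1, and the structural statements from the discussion immediately preceding the lemma together with the cited results of \cite{DG1}. For the injectivity, note that $\barI(\barlambda)$ is a non-zero polynomial module, so $\divind\,\barI(\barlambda)\geq 0$, and hence the hypothesis $\lambda^0_1\geq (n-1)(e-1)$ already forces $\lambda^0_1+e\,\divind\,\barI(\barlambda)\geq (n-1)(e-1)$. Thus $I(\lambda)$ is infinitesimally injective by Theorem 4.1.

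For the structure of $Q(\lambda^0)=I(\lambda^0)$, the role of the hypothesis is to place $\lambda^0$ in the ``Steinberg--translate'' range, i.e. to allow writing $\lambda^0=(e-1)\delta+w_0\mu$ with $\mu=(\lambda^0_n,\ldots,\lambda^0_1)-(e-1)(0,1,\ldots,n-1)$ lying in $\Lambda^+(n)\cap X_\inf(n)$. Indeed, the relation $(*)$ gives $0\leq\mu_i-\mu_{i+1}\leq e-1$ for $1\leq i<n$; the hypothesis gives $\mu_n=\lambda^0_1-(n-1)(e-1)\geq 0$, so $\mu$ is polynomial and dominant; and $\mu_1=\lambda^0_n\leq e-1$, which holds since $\lambda^0\in X_\inf(n)$, then gives $\mu\in X_\inf(n)$. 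With $\lambda^0$ in this shape, the argument of \cite{DG1}, Lemma 3.6(ii) applies to give $I(\lambda^0)|_{G_\inf T}\cong\hatQ_\inf(\lambda^0)$; since a $G_\inf T$-module is injective as a $G_\inf$-module exactly when it is injective as a $G_\inf T$-module, this yields $Q(\lambda^0)|_{G_\inf}\cong\hatQ_\inf(\lambda^0)$, where $Q(\lambda^0)$ denotes $I(\lambda^0)$.

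Finally, the factorisation $I(\lambda)\cong Q(\lambda^0)\otimes\barI(\barlambda)^F$ is \cite{DG1}, Lemma 3.2(iii); one verifies it as follows. The module $Q(\lambda^0)\otimes\barI(\barlambda)^F$ is polynomial of degree $\deg\lambda$ (both factors are polynomial and $\deg\lambda^0+e\deg\barlambda=\deg\lambda$), and its restriction to $G_\inf$ is a direct sum of copies of $\hatQ_\inf(\lambda^0)$, hence injective; by the argument of \cite{DG1}, Lemma 3.2(i) its $G$-socle is $L(\lambda^0)\otimes\barL(\barlambda)^F=L(\lambda)$, so it embeds in $I(\lambda)$. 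Comparing characters, $\ch I(\lambda)=\ch\hatQ_\inf(\lambda^0)\cdot(\ch\barI(\barlambda))^F$ by Lemma 3.2 (applicable since $I(\lambda)$ is infinitesimally injective), while $\ch\bigl(Q(\lambda^0)\otimes\barI(\barlambda)^F\bigr)=\ch I(\lambda^0)\cdot(\ch\barI(\barlambda))^F=\ch\hatQ_\inf(\lambda^0)\cdot(\ch\barI(\barlambda))^F$ by the second paragraph; hence the two modules have equal dimension and the embedding is an isomorphism. The content here is essentially bookkeeping, and there is no deep obstacle; the one step that must be checked with care is the passage to $\lambda^0=(e-1)\delta+w_0\mu$ in the second paragraph, i.e. that the hypothesis $\lambda^0_1\geq (n-1)(e-1)$ genuinely makes $\mu$ polynomial, dominant and column $e$-regular, so that the \cite{DG1} arguments concerning $I(\lambda^0)$ apply.
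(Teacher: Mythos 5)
Your argument is correct and follows essentially the same route as the paper: the inequality of Theorem 4.1 holds trivially since $\divind\,\barI(\barlambda)\geq 0$, the substitution $\lambda^0=(e-1)\delta+w_0\mu$ with the verification that $\mu\in\Lambda^+(n)\cap X_\inf(n)$ is exactly the paper's computation via $(*)$, and the identification $I(\lambda^0)|_{G_\inf T}\cong\hatQ_\inf(\lambda^0)$ rests on the same appeal to \cite{DG1}, Lemma 3.6(ii). The only difference is that you spell out the socle-plus-character argument behind the factorisation $I(\lambda)\cong Q(\lambda^0)\otimes\barI(\barlambda)^F$, which the paper simply cites as \cite{DG1}, Lemma 3.2(iii); your verification is sound.
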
 

\bs

\q So far the discussion  has been limited  to  the restriction of a polynomially injective  module   to the  infinitesimal subgroup $G_\inf$.  In positive characteristic one has a tower of infinitesimal subgroups starting with $G_\inf$ and it is natural to ask when a polynomially injective module is injective on restriction to one of these subgroups. We shall see that in fact this is determined by the case considered so far. If $k$ has characteristic $p>0$ and $l=1$ (i.e., $q=1$)  we have a chain of subgroups $G_1<G_2<\cdots $ of $G$, where $G_1=G_\inf$ and more generally $G_m$ is the subgroup scheme whose defining ideal is generated by all $c_{ij}^{p^m}-\de_{ij}$, $1\leq i,j\leq n$.

\begin{proposition} Suppose that $k$ has characteristic $p>0$ and $q=1$.  Let $\lambda\in \Lambda^+(n)$ and write    $\lambda=\sum_{i\geq 0} p^i\lambda^i$ with all $\lambda^i\in X_{\inf,p}(n)$. Then $I(\lambda)|_{G_m}$ is injective if and only if $I(\sum_{j\geq r}p^{j-r}\lambda^j)|_{G_1}$ is injective for all $0\leq r<m$.

\end{proposition}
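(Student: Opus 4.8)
The plan is to argue by induction on $m$, the case $m=1$ being vacuous. Write $\mu^{(r)}=\sum_{j\geq r}p^{j-r}\lambda^j$, so $\mu^{(0)}=\lambda$ and $\mu^{(1)}=\barlambda$. For the inductive step apply the case $m-1$ of the Proposition to $\barlambda$: its $p$-adic digits are $\lambda^1,\lambda^2,\dots$, so the weight attached to $\barlambda$ at level $r$ is $\mu^{(r+1)}$, and hence ``$I(\barlambda)|_{G_{m-1}}$ injective'' becomes equivalent to ``$I(\mu^{(r)})|_{G_1}$ injective for $1\leq r<m$''. Granting this, the Proposition for $m$ reduces to the single assertion
$$I(\lambda)|_{G_m}\text{ injective}\iff I(\lambda)|_{G_1}\text{ and }I(\barlambda)|_{G_{m-1}}\text{ both injective},$$
with $\lambda=\lambda^0+p\barlambda$, $\lambda^0\in X_{\inf,p}(n)$. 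Everything below is read off the exact sequence $1\to G_1\to G_m\xrightarrow{F}G_{m-1}\to 1$ (so $G_m/G_1\cong G_{m-1}$ via $F$) and its Lyndon--Hochschild--Serre spectral sequence; recall that over the finite group scheme $G_r$ injectivity equals projectivity, so tensoring an injective module by an arbitrary one gives an injective module, and a module whose restriction to $G_1$ is injective is $G_1$-acyclic in positive degrees.

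The auxiliary fact I would isolate is: \emph{if $A$ is a $G$-module with $A|_{G_1}$ injective and $C$ is a $G$-module with $C|_{G_{m-1}}$ injective, then $(A\otimes C^F)|_{G_m}$ is injective.} Indeed, for a $G_m$-module $X$ the coefficient module $X^*\otimes A\otimes C^F$ restricts over $G_1$ to the tensor product of the $G_1$-injective module $X^*\otimes A$ with the $G_1$-trivial module $C^F$, hence is $G_1$-injective; so in $H^a(G_{m-1},H^b(G_1,X^*\otimes A\otimes C^F))\Rightarrow H^{a+b}(G_m,X^*\otimes A\otimes C^F)$ only the row $b=0$ survives, giving $\Ext^p_{G_m}(X,A\otimes C^F)\cong H^p(G_{m-1},(X^*\otimes A)^{G_1}\otimes C)$ (split off $C^F$ and identify the $G_m/G_1$-action with $C$), which vanishes for $p>0$ since $(X^*\otimes A)^{G_1}\otimes C$ is $G_{m-1}$-injective. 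The identical collapse, applied to $N^F\otimes L(\lambda^0)$ for a $G_{m-1}$-module $N$ (now it is $I(\lambda)|_{G_1}$ that is injective), yields the companion identity $\Ext^p_{G_{m-1}}\!\big(N,\Hom_{G_1}(L(\lambda^0),I(\lambda))\big)\cong\Ext^p_{G_m}\!\big(N^F\otimes L(\lambda^0),I(\lambda)\big)$.

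For ``$\Rightarrow$'': $I(\lambda)|_{G_1}$ is injective because, by Nichols--Zoeller, $\mathrm{Dist}(G_m)$ is free over the Hopf subalgebra $\mathrm{Dist}(G_1)$, so a projective ($=$ injective) $\mathrm{Dist}(G_m)$-module is projective ($=$ injective) over $\mathrm{Dist}(G_1)$. For $I(\barlambda)|_{G_{m-1}}$, recall (proof of Theorem 4.1, via \cite{DG1}, Lemma 3.2(i)) that $\soc_{G_1}I(\lambda)=L(\lambda^0)\otimes I(\barlambda)^F$; since a $G_1$-homomorphism out of the simple module $L(\lambda^0)$ lands in the socle and $I(\barlambda)^F$ is $G_1$-trivial,
$$\Hom_{G_1}(L(\lambda^0),I(\lambda))=\Hom_{G_1}(L(\lambda^0),\soc_{G_1}I(\lambda))\cong\End_{G_1}(L(\lambda^0))\otimes I(\barlambda)^F\cong I(\barlambda)^F$$
as $G$-modules, so this module ``is'' $I(\barlambda)|_{G_{m-1}}$ via $F$. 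The companion identity then gives $\Ext^{>0}_{G_{m-1}}(N,I(\barlambda)^F)\cong\Ext^{>0}_{G_m}(N^F\otimes L(\lambda^0),I(\lambda))=0$, the last equality because $I(\lambda)|_{G_m}$ is injective; hence $I(\barlambda)|_{G_{m-1}}$ is injective.

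For ``$\Leftarrow$'' I would re-run the Humphreys--Verma construction of the proof of Theorem 4.1 one level up. Assuming $I(\lambda)|_{G_1}$ injective, that proof supplies the polynomial module $Y=W\otimes I(\barlambda)^F$ with $W=\nabla((p-1)\delta+w_0\lambda^0)\otimes\nabla((p-1)\delta)\otimes D^{\otimes-(n-1)(p-1)}$, a $G$-embedding of $Z:=L(\lambda^0)\otimes I(\barlambda)^F\cong\soc_{G_1}I(\lambda)$ into $Y$, and a $G$-homomorphism $\tilde\psi\colon Y\to I(\lambda)$ restricting on $Z$ to the inclusion $\soc_{G_1}I(\lambda)\hookrightarrow I(\lambda)$. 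As $W$ has the Steinberg module $\nabla((p-1)\delta)$ as a tensor factor, $W|_{G_1}$ is injective, so with $I(\barlambda)|_{G_{m-1}}$ injective by hypothesis the auxiliary fact shows $Y|_{G_m}$ is injective. Take a $G_m$-injective hull $Z_0$ of $Z$ inside $Y$; it is a $G_m$-direct summand of $Y$ and an essential extension of $Z$ over $G_m$, so since $\tilde\psi|_Z$ is injective, $\tilde\psi|_{Z_0}$ is injective and $\dim Z_0\leq\dim I(\lambda)$. Conversely, each simple $G_m$-module restricts to a simple $G_1$-module, whence $\soc_{G_m}Z=\soc_{G_m}\soc_{G_1}I(\lambda)=\soc_{G_m}I(\lambda)$; thus $Z_0$ is also a $G_m$-injective hull of $\soc_{G_m}I(\lambda)$ and $\dim I(\lambda)\leq\dim Z_0$. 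Hence $\tilde\psi$ restricts to an isomorphism $Z_0\to I(\lambda)$, exhibiting $I(\lambda)$ as a $G_m$-direct summand of the $G_m$-injective module $Y$, so $I(\lambda)|_{G_m}$ is injective. The step I expect to be the main obstacle is precisely this last one: the construction of Theorem 4.1 is only set up to control restriction to $G_1T$, and one must verify that, with $G_1$ replaced by $G_m$, it still yields $Z_0\cong I(\lambda)$; the essentiality argument and the socle identity $\soc_{G_m}I(\lambda)=\soc_{G_m}\soc_{G_1}I(\lambda)$ handle this, but they depend on $Y|_{G_m}$ being injective, which is exactly what the auxiliary tensor-product fact delivers.
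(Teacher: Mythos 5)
Your proof is correct, and its skeleton coincides with the paper's: the same induction on $m$, the same reduction to the single equivalence ``$I(\lambda)|_{G_m}$ injective $\iff$ $I(\lambda)|_{G_1}$ and $\barI(\barlambda)|_{\barG_{m-1}}$ injective'', and the same identification $\Hom_{G_1}(L(\lambda^0),I(\lambda))\cong \barI(\barlambda)^F$ fed into the collapsing Lyndon--Hochschild--Serre spectral sequence for $1\to G_1\to G_m\to G_m/G_1\to 1$ together with Jantzen I.9.5. Where you diverge is the direction ``$\Leftarrow$''. The paper extracts both implications from one computation: assuming $I(\lambda)|_{G_1}$ injective, it shows $\Ext^1_{G_m}(L(\mu)\otimes L(\xi)^F,I(\lambda))$ vanishes outright when $\mu\neq\lambda^0$ and is isomorphic to $\Ext^1_{\barG_{m-1}}(\barL(\xi),\barI(\barlambda))$ when $\mu=\lambda^0$, so injectivity over $G_m$ is \emph{equivalent} to injectivity of $\barI(\barlambda)$ over $\barG_{m-1}$ with no further work. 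Your own ``companion identity'' is exactly this isomorphism, so you could have closed the converse in one line; instead you re-run the Humphreys--Verma construction of Theorem 4.1 one level up, proving an auxiliary tensor-product injectivity statement and redoing the injective-hull/dimension comparison over $G_m$. That route is valid (the essentiality and socle arguments you supply do go through) and has the merit of being constructive -- it exhibits $I(\lambda)$ as a $G_m$-direct summand of a Steinberg-type tensor product -- but it is considerably longer than necessary. One small imprecision: a simple $G_m$-module $L(\mu)\otimes L(\xi)^F$ restricts to $G_1$ as a direct sum of $\dim L(\xi)$ copies of $L(\mu)$, not as a single simple module; the containment $\soc_{G_m}I(\lambda)\subseteq\soc_{G_1}I(\lambda)$ that your argument actually needs only requires semisimplicity of the restriction, so the conclusion stands.
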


\begin{proof}   We identify $G$ with $\barG$ by identifying $k[\barG]$ with $k[G]$ via the Hopf algebra isomorphism taking  $x_{ij}$ with $c_{ij}$, $1\leq i,j\leq n$. (Then $F:G\to G$ is the classical Frobenius morphism.)

\q Note that if $I(\lambda)|_{G_m}$ is injective then $I(\lambda)|_{G_1}$ is injective since $G_1$ is a normal subgroup scheme of $G_m$.

\q We write $\lambda=\lambda^0+p\barlambda$, with $\lambda^0\in X_\inf(n)$, $\barlambda\in \Lambda^+(n)$.  Suppose that $I(\lambda)|_{G_1}$ is injective. Now $I(\lambda)|_{G_m}$ is injective if and only if $\Ext^1_{G_m}(L,I(\lambda))=0$ for every simple $G_m$-module $L$.  A  simple $G_m$-module is isomorphic to $L(\mu)\otimes L(\xi)^F$ for some $\mu\in X_\inf(n)$, $\xi\in X_{\inf,p^{m-1}}(n)$. Let $L=L(\mu)\otimes L(\xi)^F$ (with $\mu\in X_\inf(n)$, $\xi\in X_{\inf,p^{m-1}}(n)$) and $V=L^*\otimes I(\lambda)$.  Then $\Ext^1_{G_m}(L,I(\lambda))\cong H^1(G_m,L^*\otimes I(\lambda))=H^1(G_m,V)$.  \\

\q By \cite{DoStd}, Proposition 1.6 and p261, we have a Grothendieck spectral sequence with second page $H^i(G_m/G_1,H^j(G_1,V))$ converging to $H^*(G_m,V)$ (see \cite{Jan},I, 6.6 Proposition (3)  for the classical case).  In particular we have the $5$-term exact sequence 
\begin{align*}0\to &H^1(G_m/G_1,V^{G_1})\to H^1(G_m,V)\to H^1(G_1,V)^{G_m/G_1}\cr
\to &H^2(G_m/G_1,V^{G_1})\to H^2(G_m,V)
\end{align*}
and since $I(\lambda)$, and hence $V$, is injective as a $G_1$-module we  have \\
$H^1(G_m/G_1,V^{G_1})\cong H^1(G_m,V)$, i.e., 
$$H^1(G_m/G_1,\Hom_{G_1}(L(\mu)\otimes L(\xi)^F,I(\lambda))=\Ext^1_{G_m}(L(\mu)\otimes L(\xi)^F,I(\lambda)).$$
  However, the $G_1$-socle of $I(\lambda)$ is $L(\lambda^0)\otimes I(\barlambda)^F$, by \cite{DG1}, Lemma 3.2(i),  so that $\Hom_{G_1}(L(\mu)\otimes L(\xi)^F,I(\lambda))=0$ if $\mu\neq \lambda^0$. Hence we need only consider the case $\mu=\lambda^0$ and in this  case we have
\begin{align*}&\Ext^1_{G_m}(L(\mu)\otimes L(\xi)^F,I(\lambda))\cr
&\cong H^1(G_m/G_1,\Hom_{G_1}(L(\mu)\otimes L(\xi)^F,I(\lambda))\cr
&\cong H^1(G_m/G_1,\Hom_{G_1}(L(\mu)\otimes L(\xi)^F,L(\lambda^0)\otimes I(\barlambda)^F)\cr
&\cong H^1(G_m/G_1,(L(\xi)^*\otimes I(\barlambda))^F).
\end{align*}
But now it follows from \cite{Jan}, I, 9.5 Proposition, that 
$$H^1(G_m/G_1,(L(\xi)^*\otimes I(\barlambda))^F)\cong H^1(G_{m-1},L(\xi)^*\otimes I(\barlambda)).$$
This shows that $I(\lambda)$ is injective as a $G_m$-module if and only if it is injective as a $G_1$-module  and $I(\barlambda)$ is injective as a $\barG_{m-1}$-module.  Now  the result  follows by induction on $m$. 

\end{proof}

\q The next proposition gives the result in the quantum case modulo the classical case (covered in Proposition 4.3). So we assume $l>1$.  We  have a chain of subgroup schemes $G_\inf<G_1'<G_2'<\cdots$, where $G_m'$ is the quantum subgroup of $G$ with defining ideal generated by 
 the elements $c_{ij}^{lp^m}-\de_{ij}$, $1\leq i,j\leq n$.  The argument of proof is as in the classical case, but we need to convince the reader that the standard homological tools used there are available also in the quantum case. 
 
 \q A quantum group $H$ over  $k$ is finite if $k[H]$ is finite dimensional. The order of a finite quantum group $H$ over $k$ is the dimension of $k[H]$ and will be denoted $|H|$.  The images of the elements $c_{11}^{a_{11}}\ldots c_{nn}^{a_{nn}}$, $1\leq a_{11},\ldots, a_{nn} < lp^m$ under the restriction map $k[G]\to k[G_m']$ form a $k$-basis of $k[G_m']$ (cf \cite{D3}, p54) and so $|G_m'|=l^{n^2}p^{mn^2}$.  The coordinate algebra  of $k[G_m']$ as a $G_\inf$-module is isomorphic to a direct sum of copies of $k[G_\inf]$, by the main result of \cite{DoRes}.  Hence we have 
 $$\dim H^0(G_\inf,k[G_m'])=|G_m'|/|G_\inf|=p^{mn^2}=|\barG_m|.$$
 
 \q Now let $\barG_m'$ be the quantum group whose coordinate algebra is the  Hopf subalgebra of $k[G_m']$ generated by all $y_{ij}^l$, $1\leq i,j\leq n$, where $y_{ij}$ is the image of $c_{ij}$ under the restriction map $k[G]\to k[G_m']$. Then we have $H^0(G_\inf,k[G_m'])=k[\barG_m']$ by \cite{DoStd}, Proposition 1.5(ii),  or the original source by Parshall-Wang, \cite{PW}, Section 2.10.  (The hypothesis that $k[G_m']$ is 
 faithfully flat over $k[\barG_m']$ is satisfied since any  finite dimensional Hopf algebra is free over its Hopf subalgebras by the theorem of Nichols and Zoeller, \cite{NZ}.)

 \q We consider now the restriction of the Frobenius morphism written $F:G_m'\to \barG_m$, by abuse of notation. Now $F^\sharp k[\barG_m]=k[\barG_m']=H^0(G_\inf,k[G_m'])$.  In particular the restriction $\theta:k[\barG_m]\to k[\barG_m']$ is a Hopf  algebra surjection and hence, by dimensions, an isomorphism.  We write $\phi:\barG_m'\to \barG_m$ for the quantum group isomorphism with $\phi^\sharp=\theta$ and let $\psi:\barG_m\to \barG_m'$ be the inverse.  By construction the composite $F\circ \psi:\barG_m'\to \barG_m$ is the identity.

\begin{proposition}  Suppose that $k$ has characteristic $p>0$ and $q\neq 1$.  Let $\lambda\in \Lambda^+(n)$ and write    $\lambda=\lambda^0+l\barlambda$ with $\lambda^0\in X_\inf(n)$, $\barlambda\in \Lambda^+(n)$.  Then $I(\lambda)|_{G_m'}$ is injective if and only if $I(\lambda)$ is injective as a $G_\inf$-module and $\barI(\barlambda)$ is injective as a $\barG_m$-module.
\end{proposition}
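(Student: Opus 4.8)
The plan is to run the argument of Proposition 4.3 in the quantum setting, substituting for each classical ingredient the quantum counterpart assembled in the paragraphs preceding the statement.

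First I would dispose of one half of the ``only if'' cheaply: if $I(\lambda)|_{G_m'}$ is injective then $I(\lambda)|_{G_\inf}$ is injective. Indeed $G_\inf$ is a normal quantum subgroup of $G_m'$ and, as recorded above, $k[G_m']$ restricts to $G_\inf$ as a direct sum of copies of $k[G_\inf]$, so every injective $G_m'$-module restricts to an injective $G_\inf$-module. Hence in both directions I may assume $I(\lambda)|_{G_\inf}$ is injective, and it remains to prove that, granted this, $I(\lambda)|_{G_m'}$ is injective if and only if $\barI(\barlambda)|_{\barG_m}$ is injective.

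Since $G_m'$ is a finite quantum group, $I(\lambda)|_{G_m'}$ is injective precisely when $\Ext^1_{G_m'}(S,I(\lambda))=0$ for every simple $G_m'$-module $S$; by Steinberg's tensor product theorem together with the identification $G_m'/G_\inf\cong\barG_m$ obtained above, every such $S$ has the form $L(\mu)\otimes\barL(\xi)^F$ with $\mu\in X_\inf(n)$ and $\xi\in X_{\inf,p^m}(n)$. Fix such an $S$, set $V=S^*\otimes I(\lambda)$ so that $\Ext^1_{G_m'}(S,I(\lambda))\cong H^1(G_m',V)$, and bring in the Grothendieck spectral sequence $H^i(G_m'/G_\inf,H^j(G_\inf,V))\Rightarrow H^{i+j}(G_m',V)$ available from \cite{DoStd}, Proposition 1.6 and p261. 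Since $I(\lambda)$, and hence $V$, is injective as a $G_\inf$-module, the associated five-term exact sequence collapses to $H^1(G_m',V)\cong H^1(G_m'/G_\inf,V^{G_\inf})$. The quantum Frobenius $F$ kills $G_\inf$, so $\barL(\xi)^F$ and $\barI(\barlambda)^F$ are trivial on restriction to $G_\inf$; combined with the fact that the $G_\inf$-socle of $I(\lambda)$ is $L(\lambda^0)\otimes\barI(\barlambda)^F$ (\cite{DG1}, Lemma 3.2(i) in the classical case, and the argument of its proof in general) this gives $V^{G_\inf}=\Hom_{G_\inf}(L(\mu)\otimes\barL(\xi)^F,I(\lambda))=0$ unless $\mu=\lambda^0$, and, for $\mu=\lambda^0$, an isomorphism of $G_m'/G_\inf=\barG_m'$-modules $V^{G_\inf}\cong(\barL(\xi)^*\otimes\barI(\barlambda))^F$. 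Transporting along the isomorphism $\phi\colon\barG_m'\to\barG_m$ constructed above, through which $F$ factors, I then obtain
$$H^1(G_m',V)\cong H^1(\barG_m,\barL(\xi)^*\otimes\barI(\barlambda))\cong\Ext^1_{\barG_m}(\barL(\xi),\barI(\barlambda))$$
for $\mu=\lambda^0$, and $H^1(G_m',V)=0$ otherwise.

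As the modules $\barL(\xi)$ with $\xi\in X_{\inf,p^m}(n)$ exhaust the simple $\barG_m$-modules, the vanishing of $\Ext^1_{G_m'}(S,I(\lambda))$ for all simple $S$ is then equivalent to the vanishing of $\Ext^1_{\barG_m}(\barL(\xi),\barI(\barlambda))$ for all such $\xi$, that is, to $\barI(\barlambda)$ being injective as a $\barG_m$-module; this finishes the proof. I do not expect a serious obstacle: the skeleton of the argument is identical to that of Proposition 4.3, and the substantive preparation — the construction of $\barG_m'$ and of the isomorphism $\phi$, the computation of $|G_m'|$, and the availability of the quantum Grothendieck spectral sequence — has already been carried out. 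The step requiring real care is the identification of the $G_m'/G_\inf$-module $V^{G_\inf}$ with $(\barL(\xi)^*\otimes\barI(\barlambda))^F$ and its transport to $\barG_m$ along $\phi$, which is the quantum replacement for the appeal to \cite{Jan}, I, 9.5 Proposition in the classical proof.
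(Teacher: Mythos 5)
Your proposal is correct and follows essentially the same route as the paper's proof: restriction to $G_\inf$ via the structure of $k[G_m']$ as a $G_\inf$-module, reduction to $\Ext^1_{G_m'}(L(\mu)\otimes \barL(\xi)^F,I(\lambda))$, collapse of the five-term sequence of the Grothendieck spectral sequence using $G_\inf$-injectivity, identification of $V^{G_\inf}$ via the $G_\inf$-socle $L(\lambda^0)\otimes\barI(\barlambda)^F$, and transport along the isomorphism $\barG_m'\cong\barG_m$ through which $F$ factors. The only cosmetic differences are that the paper quotes \cite{DoStd}, Proposition 1.5(iii) for the first reduction and \cite{Cox}, Lemma 3.1 for the classification of simple $G_m'$-modules, where you argue directly.
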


\begin{proof}   Note that if $I(\lambda)|_{G_m'}$ is injective then $I(\lambda)|_{G_\inf}$ is injective by \cite{DoStd}, Proposition 1.5(iii). 

\q Suppose that $I(\lambda)|_{G_\inf}$ is injective.  Now $I(\lambda)|_{G_m'}$ is injective if and only if $\Ext^1_{G_m'}(L,I(\lambda))=0$ for each simple $G_m'$-module $L$. It follows from \cite{Cox}, Lemma 3.1. that 
$L$ is isomorphic to  $L(\mu)\otimes \barL^F$ for some $\mu\in X_\inf(n)$ and simple $\barG$-module $\barL$.  Now by \cite{DoStd}, Proposition 1.6 (i), there is a Grothendieck spectral sequence with $E_2$ page $H^i(\barG_m',H^j(G_\inf,L^*\otimes I(\lambda)))$ converging to $H^*(G_m',L^*\otimes I(\lambda))$.  Now arguing as in the proof of Proposition 4.3 we see that $I(\lambda)$ is injective, as a $G_m'$-module, if and only if $\Ext^1_{\barG_m'}(\barL^F,\barI(\barlambda)^F)=0$ for every simple $\barG_m$-module $\barL$.  

\q But now, $\psi:\barG_m'\to \barG_m$ is an isomorphism and so we have $\Ext^1_{\barG_m'}(X,Y)=\Ext^1_{\barG_m}(X^\psi,Y^\psi)$ for all $\barG_m$-modules $X,Y$.  Hence we have 
\begin{align*}\Ext^1_{\barG_m'}(\barL^F,\barI(\barlambda)^F)&=\Ext^1_{\barG_m}((\barL^F)^\psi,(\barI(\barlambda)^F)^\psi)\cr
&=\Ext^1_{\barG_m}((\barL)^{F\circ \psi},(\barI(\barlambda))^{F\circ \psi})\cr
&=\Ext^1_{\barG_m}(\barL,\barI(\barlambda)).
\end{align*}
Thus, $I(\lambda)$ is injective as a $G_m'$-module if and only if $I(\lambda)$ is injective as a $G_\inf$-module and $\barI(\barlambda)$ is injective as a $\barG_m$-module.

\end{proof}

\bs\bs

\section{The case $n=2$}

\bs

\q We take $n=2$. Let $\lambda\in \Lambda^+(2)$ and write $\lambda=\lambda^0+e\barlambda$, with $\lambda^0\in X_\inf(2)$ and $\barlambda\in \Lambda^+(2)$.    By Lemma 3.9 we have that $I(\lambda)$ is critical if and only if $L(\lambda)$ is a composition factor of a symmetric power of the natural module $E$.  We write $\barE$ for the natural module for $\barG$.
Let $r\geq 0$ and write $r=r_0+l\barr$, with $0\leq r_0<l$, $\barr\geq 0$. For $0\leq j<e$ the module $S^j(E)$ is irreducible: all non-zero weight spaces are one dimensional so it is enough to check that each monomial in $e_1,e_2,\ldots, e_n$ generates $S^j(E)$ and this follows, for example, from  \cite{Thams},  3.4 Corollary.
It  easy to check that for $r\geq 1$ we have
$$\ch S^r(E)=\begin{cases} \ch L(e-1,0). (\ch S^\barr(\bar E))^F &  \rmif r_0=e-1;\cr
  \ch L(r_0,0) . (\ch S^\barr(\barE))^F \cr+ \ch L(e-1,r_0+1). (\ch S^{\barr-1}(\barE))^F, & \rmotherwise.
\end{cases}
$$
Recall, from Section 2.3, that the composition multiplicities of a finite dimensional $G$-module are determined by its  character.  It  follows from the Steinberg tensor product theorem that $L(\lambda)$ is a composition factor of a symmetric power if and only if $\barL(\barlambda)$ is a composition factor of a symmetric power and either $\lambda^0_1=e-1$ or $\lambda^0_2=0$. In other words we have the following criterion.

\bs

\begin{lemma} Let $\lambda=\lambda^0+e\barlambda$, with $\lambda^0\in X_\inf(2)$, $\barlambda\in \Lambda^+(2)$. Then   $I(\lambda)$ is critical if and only if either: (i) $\barI(\barlambda)$ is critical and $\lambda^0_1=e-1$; or (ii) $\barI(\barlambda)$ is critical  and  $\lambda^0_2=0$.

\end{lemma}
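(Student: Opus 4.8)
The plan is to derive the stated criterion directly from Lemma 3.9, the character formula for $S^r(E)$ displayed just above the statement, and the Steinberg tensor product theorem; the key point is that criticality of $I(\lambda)$ was shown in Lemma 3.9 to be equivalent to $L(\lambda)$ being a composition factor of $S(E)^{\otimes (n-1)}$, which for $n=2$ is simply $S(E)=\bigoplus_{r\geq 0}S^r(E)$. So the whole task reduces to: decide for which $\lambda=\lambda^0+e\barlambda$ the simple module $L(\lambda)$ occurs as a composition factor of some symmetric power $S^r(E)$.

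**First I would** record the character decomposition of $S^r(E)$ that is displayed in the excerpt. For $r\geq 1$, writing $r=r_0+e\barr$ with $0\leq r_0<e$, one has either $\ch S^r(E)=\ch L(e-1,0)\cdot(\ch S^{\barr}(\barE))^F$ when $r_0=e-1$, or otherwise $\ch S^r(E)=\ch L(r_0,0)\cdot(\ch S^{\barr}(\barE))^F+\ch L(e-1,r_0+1)\cdot(\ch S^{\barr-1}(\barE))^F$. Since composition multiplicities are determined by the character (Section 2.3), and since by Steinberg's tensor product theorem $L(\lambda)=L(\lambda^0)\otimes\barL(\barlambda)^F$, a module of the form $\ch L(\nu^0)\cdot\chi^F$ contains $L(\lambda)$ as a composition factor precisely when $\nu^0=\lambda^0$ (these being the only degree-$<e$ "bottom" pieces available) and $\barL(\barlambda)$ is a composition factor of the $\barG$-module with character $\chi$. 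Running over all $r$, the possible pairs $(\nu^0,\chi)$ arising are: $(\,(e-1,0),\ \ch S^{\barr}(\barE)\,)$ for any $\barr\geq 0$; and $(\,(r_0,0),\ \ch S^{\barr}(\barE)\,)$ and $(\,(e-1,r_0+1),\ \ch S^{\barr-1}(\barE)\,)$ for $0\leq r_0<e-1$, $\barr\geq 1$. Collecting cases, $L(\lambda^0)\otimes\barL(\barlambda)^F$ appears in some $S^r(E)$ iff $\barL(\barlambda)$ is a composition factor of some $S^{\barr}(\barE)$ — i.e.\ (by Lemma 3.9 applied to $\barG$) iff $\barI(\barlambda)$ is critical — and, for the "bottom" factor $\lambda^0$, either $\lambda^0$ has the form $(e-1,s)$ with $s\geq 0$, i.e.\ $\lambda^0_1=e-1$ (coming from either the $r_0=e-1$ case with $\lambda^0=(e-1,0)$ or the $\ch L(e-1,r_0+1)$ term), or $\lambda^0$ has the form $(r_0,0)$, i.e.\ $\lambda^0_2=0$.

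**The second half** then falls out: after translating "composition factor of a symmetric power of $\barE$" back to "$\barI(\barlambda)$ critical" via Lemma 3.9 (now over $\barG$, with $e$ replaced by $p$, but the statement as quoted folds this in), the condition on $\lambda$ reads exactly: $I(\lambda)$ is critical iff $\barL(\barlambda)$ is a composition factor of a symmetric power of $\barE$ and either $\lambda^0_1=e-1$ or $\lambda^0_2=0$, which is precisely the asserted disjunction of cases (i) and (ii). One small caveat to handle carefully is the low-degree/degenerate cases: $r=0$ gives $S^0(E)=k=L(0,0)$, covered by $\lambda^0=(0,0)$, $\barlambda=(0,0)$ under case (ii); and when $r_0=0$ the term $\ch L(r_0,0)\cdot(\ch S^{\barr}(\barE))^F$ is $\ch L(0,0)\cdot(\ch S^{\barr}(\barE))^F$, so that $L(0,0)\otimes\barL(\barlambda)^F=\barL(\barlambda)^F$ — fine, still case (ii). Also one should note the two cases are not exclusive (e.g.\ $\lambda^0=(e-1,0)$ satisfies both), which is harmless for an "if and only if either ... or ...".

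**The main obstacle** I expect is purely bookkeeping: making sure that the enumeration of which "bottom weights" $\lambda^0\in X_\inf(2)$ can occur is exhaustive and non-redundant — in particular that every $\lambda^0$ with $\lambda^0_1=e-1$ really does arise (as the $\ch L(e-1,r_0+1)$ summand with $r_0+1=\lambda^0_2$, requiring $1\leq\lambda^0_2\leq e-1$, together with the $r_0=e-1$, $\barr\geq 0$ case supplying $\lambda^0_2=0$), and that no $\lambda^0$ with $0<\lambda^0_1<e-1$ and $\lambda^0_2>0$ ever appears. Once this finite check is done the proof is just an invocation of Lemma 3.9, the displayed character formula, and Steinberg's theorem, so the write-up can be kept to a few lines — essentially "Clear from Lemma 3.9 and the description of $\ch S^r(E)$ above, together with the Steinberg tensor product theorem."
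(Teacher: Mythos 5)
Your proposal is correct and takes essentially the same route as the paper: the paper's ``proof'' of this lemma is exactly the discussion preceding it, combining Lemma 3.9 (criticality of $I(\lambda)$ is occurrence of $L(\lambda)$ in $S(E)$, since $n-1=1$), the displayed character recursion for $S^r(E)$, the fact that characters determine composition multiplicities, and the Steinberg tensor product theorem. Your explicit enumeration of which bottom weights $\lambda^0$ can arise, and the observation that cases (i) and (ii) overlap, is just the finite check the paper leaves implicit.
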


The following explicit description follows from Lemma 5.1 and  induction.

\begin{lemma} Let $\lambda\in \Lambda^+(2)$ and write  $\lambda=\lambda^0+e\barlambda$ with $\lambda^0\in X_{\inf,e}(2)$, $\barlambda\in \Lambda^+(2)$.

(i)  Suppose $k$ has characteristic $0$. The module  $I(\lambda)$ is critical if and only if $\lambda^0_1=e-1$ or $\lambda^0_2=0$, and $\barlambda_2=0$.

(ii) Suppose    $k$ has characteristic $p>0$  and write $\barlambda=\sum_{i\geq 0} p^i\barlambda^i$, with all $\bar\lambda^i\in X_{\inf,p}(2)$.  The module $I(\lambda)$ is critical if and only if $\lambda^0_1=e-1$ or $\lambda^0_2=0$ and for all $i\geq 0$ we have $\barlambda^i_1=p-1$ or $\barlambda^i_2=0$.

\end{lemma}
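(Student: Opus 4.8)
The plan is to obtain both parts as consequences of Lemma 5.1, applied recursively. I first treat part (i). Since $k$ has characteristic $0$, the group $\barG$ is a classical $\GL_2$ over a characteristic-$0$ field, so every $\barG$-module is completely reducible; in particular $\barI(\barlambda)=\barnabla(\barlambda)$, and Proposition 3.5(vi) (with $n=2$) gives $\divind\,\barI(\barlambda)=\divind\,\barnabla(\barlambda)=\barlambda_2$. Thus $\barI(\barlambda)$ is critical if and only if $\barlambda_2=0$. Feeding this into Lemma 5.1, whose two alternatives each require $\barI(\barlambda)$ to be critical together with one of $\lambda^0_1=e-1$, $\lambda^0_2=0$, yields exactly the stated criterion.

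For part (ii) I would argue by induction on the number of nonzero $p$-adic digits of $\barlambda=\sum_{i\ge 0}p^i\barlambda^i$. The key remark is that $\barG$ is itself a quantum general linear group of degree $2$ with $q=1$, for which the value of $e$ of Section 2.4 equals $p$; hence Lemma 5.1 applies to $\barG$, now with the decomposition $\barlambda=\barlambda^0+p\,\barmu$ where $\barmu=\sum_{j\ge 0}p^j\barlambda^{j+1}$, and tells us that $\barI(\barlambda)$ is critical if and only if $\barI(\barmu)$ is critical and ($\barlambda^0_1=p-1$ or $\barlambda^0_2=0$). The base case $\barlambda=0$ is trivial, since $\barI(0)=k$ is critical and the digit conditions hold vacuously; the inductive step converts ``$\barI(\barmu)$ is critical'' into ``$\barlambda^i_1=p-1$ or $\barlambda^i_2=0$ for all $i\ge1$''. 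Combining with one final application of Lemma 5.1 to $G$ itself, which supplies the condition $\lambda^0_1=e-1$ or $\lambda^0_2=0$ (in this same form whether or not $q=1$), gives the assertion.

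There is no real obstacle here beyond bookkeeping: one checks that Lemma 5.1 is stated generally enough to apply with $G$ replaced by $\barG$ and $e$ by $p$, and that the recursion terminates because the fixed weight $\barlambda$ has only finitely many nonzero $p$-adic digits (the vanishing higher digits automatically satisfy $\barlambda^i_2=0$). The only point to keep straight in part (ii) is that the condition on the $e$-digit $\lambda^0$ is separate from the conditions on the $p$-digits of $\barlambda$; these coincide when $q=1$ and differ when $q\ne1$, but the combined statement is as written in either case.
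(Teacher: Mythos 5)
Your proof is correct and follows exactly the route the paper takes: the paper's own proof of this lemma is the one-line remark that it ``follows from Lemma 5.1 and induction,'' and your argument is precisely that induction carried out in detail (complete reducibility of $\barG$-modules handling characteristic $0$, and iteration of Lemma 5.1 for $\barG$ with $e$ replaced by $p$ handling characteristic $p$).
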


\q We now have enough to explicitly describe those   $\lambda\in \Lambda^+(2)$ for which $I(\lambda)$ is infinitesimally injective. Writing $\lambda=\lambda^0+e\barlambda$ (with $\lambda^0\in X_\inf(2)$, $\barlambda\in \Lambda^+(2)$) we have by Theorem 4.1 that $I(\lambda)$ is infinitesimally injective precisely when $\lambda^0_1\geq e-1$ or $\barI(\barlambda)$ is not critical. So the following is immediate from Lemma 5.2.

\begin{proposition} Let $\lambda\in \Lambda^+(2)$ and write  $\lambda=\lambda^0+e\barlambda$ with $\lambda^0\in X_{\inf,e}(2)$, $\barlambda\in \Lambda^+(2)$.

(i) Suppose  $k$ has characteristic $0$. The module $I(\lambda)$ is infinitesimally injective if and only if $\lambda^0_1\geq e-1$ or $\barlambda_2\neq 0$.

(ii)  Suppose    $k$ has characteristic $p>0$  and write $\barlambda=\sum_{i\geq 0} p^i\barlambda^i$, with all $\bar\lambda^i\in X_{\inf,p}(2)$.  The module $I(\lambda)$ is infinitesimally injective if and only if either $\lambda^0_1\geq e-1$ or for some $i\geq 0$ we have $\barlambda^i_1\neq p-1$ and $\barlambda^i_2\neq 0$.

\end{proposition}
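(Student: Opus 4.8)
The plan is to read the statement off Theorem 4.1 and Lemma 5.2; this really is a corollary, so the only genuine work is bookkeeping with the Frobenius layers. First I would specialise Theorem 4.1 to $n=2$. With $\lambda=\lambda^0+e\barlambda$ as in the statement, Theorem 4.1 says that $I(\lambda)$ is infinitesimally injective if and only if $\lambda^0_1+e\,\divind\,\barI(\barlambda)\geq e-1$. Since $e\geq 2$ and $\divind\,\barI(\barlambda)$ is a non-negative integer, this one inequality is equivalent to the dichotomy
$$\lambda^0_1\geq e-1 \quad\text{or}\quad \divind\,\barI(\barlambda)\geq 1.$$
Indeed, if $\lambda^0_1<e-1$ the inequality forces $e\,\divind\,\barI(\barlambda)>0$, hence $\divind\,\barI(\barlambda)\geq 1$; conversely $\divind\,\barI(\barlambda)\geq 1$ gives $e\,\divind\,\barI(\barlambda)\geq e>e-1\geq e-1-\lambda^0_1$. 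By Definition 3.4, $\divind\,\barI(\barlambda)\geq 1$ is precisely the statement that $\barI(\barlambda)$ is not critical, so: $I(\lambda)$ is infinitesimally injective if and only if $\lambda^0_1\geq e-1$ or $\barI(\barlambda)$ is not critical.

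It then remains to decide, for $\barlambda\in\Lambda^+(2)$, when $\barI(\barlambda)$ is critical, and here I would invoke Lemma 5.2. In characteristic $0$ the group $\barG$ is semisimple, so $\barI(\barlambda)=\barL(\barlambda)=\barnabla(\barlambda)$, whence $\divind\,\barI(\barlambda)=\barlambda_2$ by Proposition 3.5(vi); thus $\barI(\barlambda)$ is critical exactly when $\barlambda_2=0$, and negating gives part (i). In characteristic $p>0$ I would apply Lemma 5.2(ii) inside $\barG$ — that is, with $\barlambda$ in the role of $\lambda$ and $p$ in the role of $e$ — using the $p$-adic layer decomposition $\barlambda=\sum_{i\geq 0}p^i\barlambda^i$: it tells us that $\barI(\barlambda)$ is critical if and only if $\barlambda^i_1=p-1$ or $\barlambda^i_2=0$ for every $i\geq 0$. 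Negating, $\barI(\barlambda)$ is not critical if and only if there is some $i\geq 0$ with $\barlambda^i_1\neq p-1$ and $\barlambda^i_2\neq 0$, which yields part (ii).

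I do not expect any real obstacle. The two points that call for a little care are the passage from the Theorem 4.1 inequality to the stated either/or (using $e\geq 2$ and the integrality of $\divind$, which is exactly why the threshold $e-1$ collapses into the single condition $\lambda^0_1\geq e-1$), and making sure Lemma 5.2 is applied with the correct parameters when it is used for $\barI(\barlambda)$ rather than for $I(\lambda)$: in the root-of-unity case this means the "$e$" of Lemma 5.2 must be taken to be $p$, since the Frobenius twist has carried us into the classical group $\barG$, for which the relevant infinitesimal modulus is $p$.
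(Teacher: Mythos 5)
Your proposal is correct and follows the paper's own route: specialise Theorem 4.1 to $n=2$, observe that integrality of the divisibility index collapses the inequality to ``$\lambda^0_1\geq e-1$ or $\barI(\barlambda)$ not critical'', and then translate criticality of $\barI(\barlambda)$ via Lemma 5.2 applied over $\barG$ (with $p$ playing the role of $e$). The only quibble is terminological: in characteristic $0$ it is the category of $\barG$-modules that is semisimple (complete reducibility), not the group $\barG=\GL_2$ itself.
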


\q This solves our main problem in the case $n=2$ but for the sake of completeness we shall also give an explicit description of the index of divisibility. We start by using 
Lemma 5.1  to derive an expression for the divisibility index of $\lambda\in \Lambda^+(2)$ in terms of its expansion $\lambda=\lambda^0+e\barlambda$ (with $\lambda^0\in X_\inf(2)$, $\barlambda\in \Lambda^+(2)$). 

\bs

\begin{lemma} We have 
$$\divind\,I(\lambda)=\begin{cases} \lambda^0_2, \q\q  \hbox{ if  } \lambda^0_1<e -1 \hbox{ and  }  \barI(\barlambda) \hbox{ is critical}; \cr
\lambda^0_1-(e-1) +e\,\divind\,\barI(\barlambda),  \q \hbox{ otherwise.} 
\end{cases}$$

\end{lemma}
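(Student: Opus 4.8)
The plan is to reduce the computation of $\divind\,I(\lambda)$ to the two cases of Lemma 5.1 by combining the reciprocity description of $\divind\,I(\mu)$ from Lemma 3.7 with the $n=2$ character formula for symmetric powers established just before Lemma 5.1. First I would recall that by Lemma 3.7, $\divind\,I(\lambda)=\min\{\tau_2\mid \tau\in\Lambda^+(2),\ [\nabla(\tau):L(\lambda)]\neq 0\}$, and that by Proposition 3.5(vi) and the factorisation $\nabla(\tau)=D^{\otimes\tau_2}\otimes\nabla(\tau_1-\tau_2,0)$, every $\nabla(\tau)$ in $n=2$ is a determinant twist of a symmetric power $S^{\tau_1-\tau_2}(E)=\nabla(\tau_1-\tau_2,0)$. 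So a composition factor $L(\lambda)$ of $\nabla(\tau)$ is exactly a determinant twist $D^{\otimes\tau_2}\otimes L(\nu)$ of a composition factor $L(\nu)$ of $S^{\tau_1-\tau_2}(E)$, and minimising $\tau_2$ is the same as asking: what is the largest power of $D$ by which $I(\lambda)$ is divisible, i.e. what is the smallest $j$ such that $L(\lambda)\otimes D^{\otimes -j}$ is a composition factor of a symmetric power of $E$?

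The main step is then to analyse, using the displayed character formula for $\ch S^r(E)$ together with the Steinberg tensor product theorem, precisely which determinant twists of $L(\lambda)=L(\lambda^0)\otimes\barL(\barlambda)^F$ occur in symmetric powers. Writing $L(\lambda)\otimes D^{\otimes -j}=L(\lambda^0-j\omega^0)\otimes(\barL(\barlambda-j'\omega))^F\otimes(\ \cdots)$ — more precisely, tracking how subtracting $j$ copies of $\omega=(1,1)$ interacts with the base-$e$ digit $\lambda^0$ and the higher part $\barlambda$ — I would distinguish the two regimes of Lemma 5.1. In the first regime, when $\lambda^0_1<e-1$ and $\barI(\barlambda)$ is critical: here the formula for $\ch S^r(E)$ with $r_0=\lambda^0_1$ shows that $L(\lambda)$ itself already occurs in $S^r(E)$ (the term $\ch L(r_0,0)\,(\ch S^{\bar r}(\barE))^F$ with $\bar r$ chosen so that $\barL(\barlambda)$ is a composition factor of $S^{\bar r}(\barE)$, which is possible since $\barI(\barlambda)$ is critical, i.e. $\divind\,\barI(\barlambda)=0$), but $L(\lambda)\otimes D^{\otimes -1}$ does not, because subtracting $\omega$ would force the leading digit below a legitimate value; hence the minimal $j$ is $0$ only if $\lambda^0_2=0$, and in general one peels off $\lambda^0_2$ copies of $D$ first — matching $\divind\,I(\lambda)=\lambda^0_2$. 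In the complementary regime — $\lambda^0_1\geq e-1$, or $\barI(\barlambda)$ not critical — Lemma 5.1 tells us $I(\lambda)$ is not critical, and I would show the divisibility splits multiplicatively through the Frobenius twist: we can factor out one copy of $D$ to pass from $\lambda$ to $\lambda-\omega$, and iterating this, the leading digit $\lambda^0_1$ absorbs $\lambda^0_1-(e-1)$ reductions (pushing it down to $e-1$), after which each further copy of $D$ on $\barI(\barlambda)$ costs $e$ copies of $D$ on $I(\lambda)$, since $D=D_q$ has $D^F=\barD$ composed with $e$-th power on coordinates; this yields $\divind\,I(\lambda)=\lambda^0_1-(e-1)+e\,\divind\,\barI(\barlambda)$, provided one checks this is consistent with Theorem 4.1 and Remark 3.6 (the divisibility index of $I(\mu)$ equals the corresponding quantity for $I(\mu-r\omega)$ plus $r$).

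The hard part will be the careful bookkeeping of how subtracting copies of $\omega$ interacts with the base-$e$ expansion $\lambda=\lambda^0+e\barlambda$ when $\lambda^0_1<e-1$ but $\lambda^0_2>0$: one must verify that $\lambda^0_2$ — and not fewer — copies of $D$ can be stripped, i.e. that $L(\lambda-\lambda^0_2\omega)$ lies in a symmetric power while $L(\lambda-(\lambda^0_2-1)\omega)$ does not unless that makes $\barI(\barlambda)$ noncritical, in which case we are in the other branch. This amounts to checking that $\lambda-\lambda^0_2\omega$ has base-$e$ expansion $(\lambda^0_1-\lambda^0_2,0)+e\barlambda$ with first digit still $<e-1$ and second digit $0$, so by Lemma 5.1(ii) (the "or $\lambda^0_2=0$" clause) its $I$ is critical, while any smaller determinant twist either leaves the second digit positive (so not critical, contradicting criticality of its $I$ by Lemma 5.1) or crosses a digit boundary. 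Once this digit analysis is pinned down, the two displayed cases follow by the reciprocity of Lemma 3.7 and induction as announced.
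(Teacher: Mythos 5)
Your reduction is sound and is, at bottom, the same engine the paper uses: Lemma 3.7 together with the factorisation $\nabla(\tau)=D^{\otimes\tau_2}\otimes S^{\tau_1-\tau_2}E$ (valid for $n=2$) turns $\divind\, I(\lambda)$ into the least $j$ such that $L(\lambda-j\omega)$ is a composition factor of a symmetric power, and the criterion stated just before Lemma 5.1 (equivalently Lemma 5.1 via Lemma 3.9) decides when that happens in terms of the base-$e$ digits. The paper packages exactly this content as an induction on $\deg(\lambda)$ using the recursion $\divind\,I(\lambda)=\divind\,I(\lambda-\omega)+1$ for non-critical $I(\lambda)$, running through six subcases of how $\lambda-\omega$ re-expands in base $e$.

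The gap is that what you call ``the hard part'' --- the bookkeeping of how subtracting $\omega$ interacts with the expansion $\lambda=\lambda^0+e\barlambda$ --- is not a detail to be pinned down later; it \emph{is} the proof, and you only carry it out in the first regime. In the second regime your one concrete mechanism (``the leading digit absorbs $\lambda^0_1-(e-1)$ reductions, pushing it down to $e-1$, after which each further copy of $D$ on $\barI(\barlambda)$ costs $e$ copies of $D$ on $I(\lambda)$'') fails when $\lambda^0_1<e-1$ but $\barI(\barlambda)$ is not critical (then $\lambda^0_1-(e-1)<0$ and there is nothing to absorb), and more generally whenever $\lambda^0_2=0$: there $\lambda-\omega=(\lambda^0+(e-1)\omega)+e(\barlambda-\omega)$, i.e.\ one must borrow from $\barlambda$ and both digits change. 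These borrow cases are precisely cases (b) (second subcase) and (c) (third subcase) of the paper's proof, and they are where the coefficient $e$ in front of $\divind\,\barI(\barlambda)$ is actually produced; the Frobenius remark $\barD^F=D^{\otimes e}$ is the right intuition but proves nothing by itself unless you also have a tensor factorisation $I(\lambda)\cong Q(\lambda^0)\otimes\barI(\barlambda)^F$, which Lemma 4.2 supplies only when $\lambda^0_1\geq e-1$. To close the argument you must either run the induction with the full case division, as the paper does, or complete the direct computation: for $j$ with $\lceil(j-\lambda^0_2)/e\rceil<\divind\,\barI(\barlambda)$ the module $I(\lambda-j\omega)$ fails the $\barI$-criticality condition of Lemma 5.1, and within the first window of $j$ where that condition holds the smallest admissible $j$ is the one making $(\lambda-j\omega)^0_1=e-1$, namely $j=\lambda^0_1-(e-1)+e\,\divind\,\barI(\barlambda)$.
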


\begin{proof}  We argue by induction on the degree of $\lambda$. If $\lambda=0$ then $I(\lambda)$ is critical and $\divind\,I(\lambda)=\lambda^0_2=0$.  Now suppose that  $\lambda\in \Lambda^+(2)$ has positive degree but that the result holds for all $\mu\in\Lambda^+(2)$ of smaller degree. There are various cases to consider.

\bs

(a) $\lambda^0_1<e-1$, $\divind\,\barI(\barlambda)=0$.

\q  If $I(\lambda^0)$ is critical then  $\lambda^0_2=0$, by Lemma 5.1,  so that   $\divind\,I(\lambda)=0=\lambda^0_2$.  Otherwise $\lambda^0_2>0$ and $\lambda$ is not critical (by Lemma 5.1)  so we have
\begin{align*}\divind\,I(\lambda)&=\divind\,I(\lambda-\omega)+1=\divind\,I((\lambda^0-\omega)+e\barlambda)+1\cr
&=(\lambda^0-\omega)_2+1=\lambda^0_2-1+1=\lambda^0_2\end{align*}
using the inductive hypothesis.

\bs

(b) $\lambda^0_1<e-1$, $\divind\,\barI(\barlambda)>0$. 

\q Then $I(\lambda)$ is not critical, by Lemma 5.1,  so we get $\divind\,I(\lambda)=\divind\,I(\lambda-\omega)+1$.

\q  If $\lambda^0_2\neq 0$ then we get 
\begin{align*}\divind\,I(\lambda)&=\divind\,I(\lambda-\omega)+1=\divind\,I((\lambda^0-\omega)+e\barlambda)+1\cr
&=(\lambda^0-\omega)_1-(e-1)+e\,\divind\,\barI(\barlambda)+1\cr
&=\lambda^0_1-1-(e-1)+e\,\divind\,\barI(\barlambda)+1\cr
&=\lambda^0_1-(e-1)+e\,\divind\,\barI(\barlambda)
\end{align*}
using the inductive hypothesis.

\q  If $\lambda^0_2=0$ then again $I(\lambda)$ is not critical, by Lemma 5.1, so we get
\begin{align*}\divind\,I(\lambda)&=\divind\,I(\lambda-\omega)+1=\divind\,I(\lambda^0+(e-1)\omega+e(\barlambda-\omega))+1\cr
&=(\lambda^0+(e-1)\omega)_1-(e-1)+e\,\divind\,\barI(\barlambda-\omega)+1\cr
&=\lambda^0_1+(e-1)-(e-1)+e(\divind\,\barI(\barlambda)-1)+1\cr
&=\lambda^0_1-(e-1)+e\divind\,\barI(\barlambda)
\end{align*}
using the inductive hypothesis. 

\bs

(c) $\lambda^0_1\geq e-1$.

\q If $\lambda^0_1=e-1$ and $\barI(\barlambda)$ is critical then $I(\lambda)$ is critical, by Lemma 5.1, and $\divind\,I(\lambda)=0=\lambda^0_1-(e-1)$.   

\q  If $\lambda^0_1=e-1$, $\lambda^0_2\neq 0$ and $\barI(\barlambda)$ is not critical then $I(\lambda)$ is not critical (by Lemma 5.1) and
\begin{align*}
\divind\,I(\lambda)&=\divind\,I(\lambda-\omega)+1=\divind\,I((\lambda^0-\omega)+e\barlambda)+1\cr
&=(\lambda-\omega)^0_1-(e-1)+e\,\divind\,\barI(\barlambda)+1\cr
&=\lambda^0_1-(e-1)+e\,\divind\,\barI(\barlambda)
\end{align*}
using the inductive hypothesis.

\q If $\lambda^0=e-1$, $\lambda^0_2=0$ and $\barI(\barlambda)$ is not critical then $\lambda$ is not critical (by Lemma 5.1) and we have 
\begin{align*}
\divind\,I(\lambda)&=\divind\,I(\lambda-\omega)+1=\divind\,I((\lambda^0+(e-1)\omega)+e(\barlambda-\omega))+1\cr
&=(\lambda^0+(e-1)\omega)_1-(e-1)+e\,\divind\,\barI(\barlambda)-e+1\cr
&=\lambda^0_1-(e-1)+e\,\divind\,I(\barlambda)
\end{align*}
using the inductive hypothesis.

\q Finally if $\lambda^0_1>e-1$ then $\lambda^0_2\neq 0$ (since $\lambda^0\in X_\inf(2)$) and $I(\lambda)$ is not critical (by Lemma 5.1) so we have
\begin{align*}
\divind\,I(\lambda)&=\divind\,I(\lambda-\omega)+1=\divind\,I((\lambda^0-\omega)+e\barlambda)+1\cr
&=(\lambda^0-\omega)_1-(e-1)+e\,\divind\,\barI(\barlambda)+1\cr
&=\lambda^0_1-(e-1)+e\,\divind\,\barI(\barlambda)
\end{align*}
using the inductive hypothesis.

\q This completes the proof of the Lemma by induction on degree.

\end{proof}

We now give an explicit description of the divisibility index of injective indecomposable polynomial modules.

\bs

\begin{proposition} Let $\lambda\in \Lambda^+(2)$ and write $\lambda=\lambda^0+e\barlambda$, with $\lambda^0\in X_{\inf,e}$, $\barlambda\in \Lambda^+(2)$.

(i) Suppose $k$ has characteristic $0$. Then we have
$$\divind\, I(\lambda)=\begin{cases}\lambda^0_2, & \rmif  \lambda^0_1<e-1 \rmand \barlambda_2=0;\cr
\lambda^0_1+1+e(\barlambda_2-1), &\rmotherwise.
\end{cases}
$$

(ii) Suppose  $k$ has characteristic $p>0$ and $l=1$.  We write $\lambda=\sum_{i\geq 0} p^i\lambda^i$, with all $\lambda^i\in X_{\inf,p}(2)$.  If for all $i$ we have $\lambda^i_1=p-1$ or $\lambda^i_2=0$ 
then $\divind\, I(\lambda)=0$. Otherwise we have
$$\divind\, I(\lambda)=\begin{cases}
\alpha_1+p^m\lambda^m_2-(p^m-1), & \rmif \lambda^m_1<p-1;\cr
\alpha_1+p^m\lambda^m_1-(p^{m+1}-1), & \rmif \lambda^m_1\geq p-1
\end{cases}
$$
where $m$ is the maximal such that $\lambda^m_1\neq p-1$ or $\lambda^m_2\neq 0$ and $\alpha=\sum_{0\leq i<m} p^i\lambda^i$.

(iii)  Suppose $k$ has characteristic $p>0$  and $l>1$. We write $\barlambda=\sum_{i\geq 0} p^i\barlambda^i$, with all $\barlambda^i\in X_{\inf,p}(2)$. If $\lambda^0_1=e-1$ or $\lambda^0_2=0$ and for all $i$ we have $\barlambda^i_1=p-1$ or $\barlambda^i_2=0$ then $\divind\, I(\lambda)=0$.  If $\lambda^0_1\neq e-1$ and $\lambda^0_2\neq 0$ and for all $i$ we have $\barlambda^i_1=p-1$ or $\barlambda^i_2=0$ then 
$$\divind\, I(\lambda) =\begin{cases} \lambda^0_2, &\rmif \lambda^0_1 < e-1;\cr
\lambda^0_1-(e-1), &\rmif \lambda^0_1\geq e-1.
\end{cases}
$$
In all other cases we have
$$\divind\, I(\lambda)=\begin{cases}
\alpha_1+ep^m\barlambda^m_2-(ep^m-1), & \rmif \lambda^m_1<p-1;\cr
\alpha_1+ep^m\barlambda^m_1-(ep^{m+1}-1), & \rmif \lambda^m_1\geq p-1
\end{cases}
$$
where $m$ is maximal such that $\barlambda^m_1\neq p-1$ or $\barlambda^m_2\neq 0$ and \\
$\alpha=\lambda^0+e\sum_{0\leq i<m}p^i\barlambda$.

\end{proposition}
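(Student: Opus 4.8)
The plan is to prove all three parts of Proposition~5.6 by unwinding Lemma~5.4 recursively, using the $p$-adic (or $e$-adic, then $p$-adic) expansion of $\lambda$. The key observation is that Lemma~5.4 expresses $\divind\,I(\lambda)$ in terms of $\lambda^0$ together with $\divind\,\barI(\barlambda)$, and whether $\barI(\barlambda)$ is critical; the latter is exactly the information packaged by Lemma~5.2 (and Lemma~5.1). So the whole proof is a finite iteration: apply Lemma~5.4, and if we are in the ``otherwise'' case, apply it again to $\barlambda$ in place of $\lambda$, and so on, stopping at the first $p$-adic digit where criticality fails.

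First I would dispose of part~(i), the characteristic $0$ case. Here $e$ plays the role of the only modulus and there is no further $p$-adic expansion: $\barlambda$ is simply a partition, and by Lemma~5.2(i), $\barI(\barlambda)$ is critical if and only if $\barlambda_2=0$. If $\lambda^0_1<e-1$ and $\barlambda_2=0$, then $\barI(\barlambda)$ is critical, so Lemma~5.4 gives $\divind\,I(\lambda)=\lambda^0_2$. Otherwise, either $\lambda^0_1\geq e-1$ or $\barlambda_2\neq 0$; in the latter subcase $\barI(\barlambda)$ is not critical, and (using Proposition~3.5(vi) and Lemma~3.7, or directly that in rank $1$ over the $\bar G$ one has $\divind\,\barI(\barlambda)=\barlambda_2$ since the semisimple rank-$1$ representation theory makes $\barI(\barlambda)=\barnabla(\barlambda)$ when $q=1$, char $0$) I would record $\divind\,\barI(\barlambda)=\barlambda_2$, so Lemma~5.4 yields $\divind\,I(\lambda)=\lambda^0_1-(e-1)+e\barlambda_2=\lambda^0_1+1+e(\barlambda_2-1)$. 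When $\lambda^0_1\geq e-1$ the same Lemma~5.4 formula applies directly with the same value of $\divind\,\barI(\barlambda)$.

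For parts~(ii) and~(iii) the argument is an induction on the number of nonzero $p$-adic digits of $\barlambda$ (respectively of $\lambda$). The driving step: if every digit $\barlambda^i$ satisfies $\barlambda^i_1=p-1$ or $\barlambda^i_2=0$, then by Lemma~5.2(ii) $\barI(\barlambda)$ is critical, and then $\divind\,I(\lambda)$ is $\lambda^0_2$ (if $\lambda^0_1<e-1$) or $\lambda^0_1-(e-1)$ (if $\lambda^0_1\geq e-1$) by Lemma~5.4, which (together with the observation that when also $\lambda^0_1=e-1$ or $\lambda^0_2=0$ the module $I(\lambda)$ itself is critical, so the index is $0$) gives exactly the stated ``first two'' cases. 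In the remaining ``all other cases'', let $m$ be maximal with $\barlambda^m_1\neq p-1$ or $\barlambda^m_2\neq 0$. Then $\barmu:=\sum_{i>m}p^{i-m-1}\barlambda^{m+1+?}$ --- more precisely the tail above digit $m$ --- indexes a critical $\bar I$, so applying Lemma~5.4 repeatedly down the digits $0,1,\dots$ of $\barlambda$: each digit $\barlambda^i$ with $i<m$ contributes through the ``otherwise'' branch, peeling off a term $p^i e(\cdots)-$ corrections, and at digit $m$ itself we hit the ``critical'' branch of the inner $\barI$, producing $p^m\barlambda^m_2$ or $p^m\barlambda^m_1$ depending on whether $\barlambda^m_1<p-1$ or $\barlambda^m_1\geq p-1$. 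Collecting the telescoping corrections $\sum_i (e-1)p^i$ and $(ep^{i+1}-ep^i)$ etc.\ gives the closed forms $\alpha_1+ep^m\barlambda^m_2-(ep^m-1)$ and $\alpha_1+ep^m\barlambda^m_1-(ep^{m+1}-1)$ with $\alpha=\lambda^0+e\sum_{0\leq i<m}p^i\barlambda^i$. Part~(ii) is the special case $e=p$, $l=1$, with $\lambda^0$ absorbed as the $0$-th $p$-adic digit $\lambda^0=\lambda^0$ (so $\barlambda^i$ there is written $\lambda^{i+1}$ and $m$ is the maximal digit of $\lambda$ itself with the failure property), and the formulae become $\alpha_1+p^m\lambda^m_2-(p^m-1)$ and $\alpha_1+p^m\lambda^m_1-(p^{m+1}-1)$.

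The main obstacle will be the bookkeeping of the arithmetic corrections in the iteration: each application of the ``otherwise'' branch of Lemma~5.4 converts $\divind\,I(\nu)$ into $\nu^0_1-(e-1)+e\,\divind\,\barI(\barnu)$, and one must carefully verify that carrying a digit (the subcase $\lambda^0_2=0$, where one rewrites $\lambda^0+e\barlambda=(\lambda^0+(e-1)\omega)+e(\barlambda-\omega)$) produces exactly the claimed $-(ep^m-1)$ versus $-(ep^{m+1}-1)$ and correctly shifts which branch ($\lambda^m_1<p-1$ vs.\ $\geq p-1$) one lands in. I would organize this as a clean sublemma: for $\nu=\nu^0+e\barnu$ with $\barI(\barnu)$ critical, $\divind\,I(\nu)=\max\{0,\nu^0_1-(e-1)\}$ when $\nu^0_1=e-1$ or $\nu^0_2=0$ forces $0$, else the two displayed values; and then feed this into the telescoping sum, checking the base case $m$ against Lemma~5.4 and the inductive step digit-by-digit. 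With the sublemma in hand, parts~(ii) and~(iii) reduce to verifying a geometric-series identity $\sum_{0\leq i<m}(e p^{i+1}-ep^i)\cdot(\text{stuff}) = $ the stated constant, which is routine.
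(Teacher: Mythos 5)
Your proposal is correct and is essentially the paper's own argument: part (i) falls out of Lemma 5.4 once one notes that complete reducibility gives $\barI(\barlambda)=\barnabla(\barlambda)$ with divisibility index $\barlambda_2$, part (ii) is the digit-by-digit recursion you describe (the paper phrases it as a minimal-counterexample induction on $m$, applying Lemma 5.4 once to shift all $p$-adic digits down and then collecting the arithmetic exactly as in your telescoping step), and part (iii) is obtained by one further application of Lemma 5.4 to reduce to (ii). The only cosmetic difference is that you unroll the induction into an explicit telescoping sum, whereas the paper performs a single inductive step and a short direct computation.
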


\begin{proof} (i) This is clear from Lemma 5.4.

(ii) For the first assertion see Lemma 5.2.  If the remaining assertion is false choose $\lambda$ for which it fails with $m$ as small as possible. Note that we must have $m>0$ by Lemma 5.4, in particular, by Lemma 5.2, $\barI(\lambda)$ is not critical.  Hence by Lemma 5.4 again we have 
$$\divind\, I(\lambda)=\lambda^0_1-(p-1)+ p\, \divind\, \barI(\barlambda).$$
By minimality of $m$ we have
$$\divind\, \barI(\barlambda)=\begin{cases}
\beta_1+p^{m-1}\lambda^m_2-(p^{m-1}-1), & \rmif \lambda^m<p-1;\cr
\beta_1+p^{m-1}\lambda^m_1-(p^m-1), & \rmif \lambda^m\geq p-1
\end{cases}
$$
where $\beta=\sum_{1\leq i<m} p^{i-1}\lambda^i$.  Hence we have
\begin{align*}\divind \, &I(\lambda)\cr
&=\begin{cases}
\lambda^0_1-(p-1)+ p(\beta_1+p^{m-1}\lambda^m_2-(p^{m-1}-1)), & \rmif \lambda^m<p-1;\cr
\lambda^0_1-(p-1)+p(\beta_1+p^{m-1}\lambda^m_1-(p^m-1)), & \rmif \lambda^m\geq p-1.
\end{cases}\cr
&=\begin{cases}(\lambda^0+p\beta)_1+p^m\lambda^m_2-(p^m-1),  & \rmif \lambda^m<p-1;\cr
(\lambda^0+p\beta)_1 + p^m\lambda^m_1 - (p^{m+1}-1), &   \rmif \lambda^m\geq p-1
\end{cases}\cr
&=\begin{cases}\alpha_1+p^m\lambda^m_2-(p^m-1),  & \rmif \lambda^m<p-1;\cr
\alpha_1 + p^m\lambda^m_1 - (p^{m+1}-1), &   \rmif \lambda^m\geq p-1.
\end{cases}
\end{align*}

(iii) This follows from (ii) and Lemma 5.4.

\end{proof}

\bs\rm

\q We now use Lemma 5.4 to show that any polynomially and infinitesimal injective indecomposable module may written in a standard form as a tensor product.  Suppose first that $I(\lambda)$ is infinitesimally injective and that $\divind\, I(\lambda)=0$.  As usual we write $\lambda=\lambda^0+e\barlambda$ with $\lambda^0\in X_\inf(2)$, $\barlambda\in \Lambda^+(2)$.  Then  either $\lambda^0_1\geq e-1$ or 
$\divind \,\barI(\barlambda)\neq 0$ by Theorem  4.1. Hence we have 
$$0=\divind\,I(\lambda)=\lambda^0_1-e+1+e\,\divind\,\barI(\barlambda).$$ 
Hence $\lambda^0_1=e-1$ and $\divind\,\barI(\barlambda)=0$.  Hence we have $I(\lambda)=Q(\lambda^0)\otimes \barI(\barlambda)^F$, where $Q(\lambda^0)=I(\lambda^0)$ with $Q(\lambda^0)|_{G_\inf T}\cong \hatQ_\inf(\lambda^0)$ as $G_\inf T$-modules, by Lemma 4.2. Now consider more  generally $\lambda\in \Lambda^+(2)$ such that  $I(\lambda)$ is infinitesimally injective.  Let $m=\divind\, I(\lambda)$. Then putting $\mu=\lambda-m\omega$ we have that $I(\mu)=D^{\otimes -m}\otimes I(\lambda)$ is infinitesimally injective and $\divind\, I(\mu)=0$. Writing $\mu=\mu^0+e\barmu$ with $\mu^0\in X_1(2)$, $\barmu\in \Lambda^+(2)$ we have
 $\mu^0_1=e-1$, $\barI(\barmu)$ critical and  $I(\mu)=Q(\mu^0)\otimes \barI(\barmu)^F$.  Hence we obtain $I(\lambda)= Q(\mu^0)\otimes D^{\otimes m}\otimes  \barI(\barmu)^F$. 

\q Now we write $m=m^0+e\barm$, with $0\leq m^0<e$, $\barm\geq 0$.  Then $\lambda=m\omega+\mu$ gives
$$\lambda=\mu^0+m^0\omega+e(\barm\omega+\barmu).$$

\q We examine the possibilities.

Case  (i): $\mu^0_2+m^0 < e$.  Then $\mu^0+m^0\omega\in X_\inf(2)$  and so we have $\lambda^0=\mu^0+m^0\omega$, $\barlambda=\barm\omega+\barmu$. 

Case (ii): $\mu^0_2+m^0\geq e$. Then  $\mu^0+m^0\omega -e\omega\in X_\inf(2)$  and we have $\lambda^0=\mu^0+m^0\omega-e\omega$ and $\barlambda=(\barm+1)\omega+\barmu$. This implies $\lambda^0_1<e-1$.

\q Note that in Case (i) we have $\lambda^0_1\geq e-1$ and in Case (ii) we have $\lambda^0_1<e-1$.  Organising the description of $I(\lambda)$ in terms of $\lambda^0$ and $\barlambda$ we therefore have the following result.

\bs

\begin{proposition} Let $\lambda\in \Lambda^+(2)$ and write $\lambda=\lambda^0+e\barlambda$ with $\lambda^0\in X_\inf(2)$, $\barlambda\in \Lambda^+(2)$. Suppose that $I(\lambda)$ is infinitesimally injective, i.e., that $\lambda^0_1\geq e-1$ or $\barI(\barlambda)$ is not critical. Let $m=\divind\,\barI(\lambda)$ and write $m=m^0+e\barm$ with $0\leq m^0<e$ and $\barm\geq 0$. 
The we have
$$
I(\lambda)=\begin{cases} Q(\lambda^0-m^0\omega)\otimes D^{\otimes m}\otimes I(\barlambda-\barm\omega)^F, & \hbox{ if }  \lambda^0_1\geq e-1\cr
Q(\lambda^0-m^0\omega+e\omega)\otimes D^{\otimes m} \otimes I(\barlambda - (\barm+1)\omega)^F,  & \hbox{ if } \lambda^0_1< e-1.
\end{cases}
$$
\end{proposition}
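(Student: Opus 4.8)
The plan is to reduce the statement to the case where $I(\lambda)$ is \emph{critical} (divisibility index $0$), handle that case with Theorem~4.1, Lemma~5.4 and Lemma~4.2, and then recover the general statement by an $e$-adic ``carry'' computation. First I would set $m=\divind I(\lambda)$ and $\mu=\lambda-m\omega$. By Definition~3.4 the module $Z:=D^{\otimes -m}\otimes I(\lambda)$ is polynomial, so $I(\lambda)=D^{\otimes m}\otimes Z$, and Remark~3.6 identifies $Z$ with $I(\mu)$; moreover $\divind I(\mu)=0$ by Proposition~3.5(vii). Since $D^{\otimes -m}$ is a one-dimensional, hence invertible, module, $X\mapsto D^{\otimes -m}\otimes X$ is an exact auto-equivalence of $G_\inf\text{-mod}$, so $I(\mu)=D^{\otimes -m}\otimes I(\lambda)$ is again infinitesimally injective. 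Thus it suffices to analyse a critical infinitesimally injective $I(\mu)$ and then to carry the factor $D^{\otimes m}$ along.

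For the critical case, write $\mu=\mu^0+e\barmu$ with $\mu^0\in X_\inf(2)$, $\barmu\in\Lambda^+(2)$. By Theorem~4.1 (with $n=2$) we have $\mu^0_1+e\,\divind\barI(\barmu)\ge e-1$, which rules out the first branch of Lemma~5.4 (that branch requires $\mu^0_1<e-1$ together with $\barI(\barmu)$ critical, i.e. $\divind\barI(\barmu)=0$, contradicting the inequality). Hence Lemma~5.4 gives $0=\divind I(\mu)=\mu^0_1-(e-1)+e\,\divind\barI(\barmu)$, so that $e\,\divind\barI(\barmu)=(e-1)-\mu^0_1\le e-1<e$; this forces $\divind\barI(\barmu)=0$ and $\mu^0_1=e-1$. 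Now $\mu^0_1=e-1=(n-1)(e-1)$, so Lemma~4.2 applies and yields $I(\mu)=Q(\mu^0)\otimes\barI(\barmu)^F$ with $Q(\mu^0)=I(\mu^0)$ and $Q(\mu^0)|_{G_\inf}\cong\hatQ_\inf(\mu^0)$. Therefore $I(\lambda)=Q(\mu^0)\otimes D^{\otimes m}\otimes\barI(\barmu)^F$.

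Finally I would re-express $\mu^0$ and $\barmu$ in terms of $\lambda^0$ and $\barlambda$. Writing $m=m^0+e\barm$ with $0\le m^0<e$, we get $\lambda=(\mu^0+m^0\omega)+e(\barm\omega+\barmu)$. Since $\mu^0_1-\mu^0_2<e$ is unaffected by adding $m^0\omega$, the only question is whether $\mu^0_2+m^0<e$: if so there is no carry, $\lambda^0=\mu^0+m^0\omega$ and $\barlambda=\barm\omega+\barmu$ (Case (i)); otherwise $0\le\mu^0_2+m^0-e<e$ and $\lambda^0=\mu^0+m^0\omega-e\omega$, $\barlambda=(\barm+1)\omega+\barmu$ (Case (ii)). Because $\mu^0_1=e-1$, in Case (i) $\lambda^0_1=(e-1)+m^0\ge e-1$, while in Case (ii) $\lambda^0_1=(e-1)+m^0-e=m^0-1\le e-2<e-1$; so Case (i)/(ii) is exactly the alternative $\lambda^0_1\ge e-1$ versus $\lambda^0_1<e-1$ appearing in the statement. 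Substituting $\mu^0=\lambda^0-m^0\omega$, $\barmu=\barlambda-\barm\omega$ (Case (i)) and $\mu^0=\lambda^0-m^0\omega+e\omega$, $\barmu=\barlambda-(\barm+1)\omega$ (Case (ii)) into $I(\lambda)=Q(\mu^0)\otimes D^{\otimes m}\otimes\barI(\barmu)^F$ gives the two displayed formulas. The only genuine obstacle is the bookkeeping in this last step---keeping the ``digits'' $\mu^0$, $m^0$ in their prescribed ranges and verifying that the carry dichotomy matches the case split on $\lambda^0_1$; everything else is a direct assembly of results already established.
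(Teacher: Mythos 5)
Your proposal is correct and follows essentially the same route as the paper: reduce to the critical case via $\mu=\lambda-m\omega$ and Remark 3.6, use Theorem 4.1 together with Lemma 5.4 to force $\mu^0_1=e-1$ and $\divind\barI(\barmu)=0$ so that Lemma 4.2 gives $I(\mu)=Q(\mu^0)\otimes\barI(\barmu)^F$, and then perform the $e$-adic carry analysis matching the dichotomy $\mu^0_2+m^0<e$ versus $\mu^0_2+m^0\geq e$ with the case split $\lambda^0_1\geq e-1$ versus $\lambda^0_1<e-1$. Your treatment is in fact slightly more explicit than the paper's at two points (the justification that $I(\mu)$ remains infinitesimally injective, and the verification that the carry dichotomy coincides with the stated case split).
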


\bs\bs\bs

\end{document}